\providecommand{\U}[1]{\protect\rule{.1in}{.1in}}
\newtheorem{theorem}{Theorem}[section]
\newtheorem{lemma}{Lemma}[section]
\theoremstyle{definition}
\newtheorem{definition}{Definition}[section]
\theoremstyle{remark}
\DeclareMathOperator*{\esssup}{ess\,sup}
\numberwithin{equation}{section}
\begin{document}
	\begin{frontmatter}
		
		\title{Pontryagin maximum principle for fractional delay differential equations and controlled weakly singular Volterra delay integral equations }
		
		

     	\author[]{Jasarat J. Gasimov}
		\ead{jasarat.gasimov@emu.edu.tr}
		\author[]{Javad A. Asadzade}
		\ead{javad.asadzade@emu.edu.tr}
		\author[]{Nazim I. Mahmudov}
		\ead{nazim.mahmudov@emu.edu.tr}
		\cortext[cor1]{Corresponding author}

		\address{Department of Mathematics, Eastern Mediterranean University, Mersin 10, 99628, T.R. North Cyprus, Turkey}
		\address{	Research Center of Econophysics, Azerbaijan State University of Economics (UNEC), Istiqlaliyyat Str. 6, Baku 1001, Azerbaijan}
		

		\begin{abstract}
			
		In this article, we explore two distinct issues. Initially, we examine the utilization of the Pontriagin maximum principle in relation to fractional delay differential equations. Additionally, we discuss the optimal approach for solving the control problem for equation (1.1) and its associated payoff function (1.2). Following that, we investigate the application of the Pontryagin Maximum principle in the context of Volterra delay integral equations (1.3). We strength the results of our study by providing illustrative examples at the end of the article.
			\noindent
		\end{abstract}

		\begin{keyword}
		Pontryagin's maximum principle, optimal control, fractional ordinary delay differential equation, weakly singular Volterra delay integral equation.
		\end{keyword}
	\end{frontmatter}

	\section{INTRODUCTION}
	
	\label{Sec:intro}
The Pontryagin Maximum Principle is a powerful tool in optimal control theory that is used to derive necessary conditions for finding optimal control trajectories. This principle was first introduced by Lev S. Pontryagin in 1956, and it is widely used in various mathematical and applied fields.

In recent years, the Pontryagin Maximum Principle has been extended to fractional delay differential equations (FDDEs) and delayed Volterra integral equations (DVIEs). These equations are widely used in various areas of science and engineering, including control systems, ecology, finance, biology, physics, and chemistry.

The Caputo FDDE is a type of fractional differential equation that contains a time delay term. It describes the dynamics of systems that exhibit memory effects, and it has been used to model various phenomena such as heat conduction, viscoelasticity, and fractional order control systems. The delayed Volterra integral equation, on the other hand, is a type of integral equation that contains a time delay term. It describes the behavior of systems that exhibit history-dependent dynamics, and it has been used to model various phenomena such as population dynamics, chemical reaction kinetics, and electrical networks.

The Pontryagin Maximum Principle for Caputo FDDEs and DVIEs is a set of necessary conditions that must be satisfied by optimal control trajectories for these equations. It provides a method to solve optimization problems involving these equations, such as finding the control inputs that minimize the energy consumption or maximize the system performance.

The necessary conditions derived from the Pontryagin Maximum Principle for Caputo FDDEs and DVIEs involve the optimal control trajectory, the corresponding adjoint function, and other variables that depend on the specific problem being considered. These conditions are often expressed in terms of differential or integral equations, and they can be used to derive optimal control strategies for a wide range of systems.

The article deals with two issues: Firstly, let us consider the following Pontriagin Maximum Principle for delayed differential equation. 
\begin{equation}
	\begin{cases}
		{^{C}}D^{\alpha}_{t}y(t)= f(t,y(t),y(t-h),u(t)), \quad a.e.\quad  t\in [0,T],\\
		y(t)=0,\quad -h\leq t\leq 0,\quad h>0.
	\end{cases}
\end{equation}
\begin{equation}
	J(u(\cdot))=\int_{0}^{T} g(t,y(t),y(t-h),u(t))dt.
\end{equation}
This article discusses the best way to solve the control problem for equation (1.1) and its corresponding payoff function (1.2).

Afterwards, it examines the application of the Pontryagin Maximum principle for Volterra delay integral equations.

\begin{equation}
	\begin{cases}
		y(t)=\eta(t)+\int_{0}^{t}\frac{f(t,s,y(s),y(s-h),u(s))}{(t-s)^{1-\alpha}}ds , \quad a.e.\quad  t\in [0,T],\\
		y(t)=0,\quad -h\leq t\leq 0,\quad h>0.
	\end{cases}
\end{equation}

In the overhead, $\eta(\cdot)$ and $f(\cdot, \cdot, \cdot, \cdot,\cdot)$ are given functions, called the free term and the generator of the state equation, respectively, $y(\cdot)$ is called the state trajectory taking values in the Euclidean space $R^{n}$, and $u(\cdot)$ is called the control taking values in some separable metric space $U$.  In order to evaluate the effectiveness of the control, we establish a payoff functional
\begin{equation}
	J(u(\cdot))=\int_{0}^{T} g(t,y(t),y(t-h),u(t))dt.
\end{equation}
with a term on the right hand expressing the running cost.

In article [\cite{1}], Lin and Yong addressed the issue of a state equation without delay, focusing on a cost functional that encompasses both running costs and pre-specified instant costs. However, in our paper, we extend this framework by considering a state equation generator that incorporates delay variables. Furthermore, our proposed cost functional not only includes running costs but also incorporates delay variables into these running costs.

Previous studies have focused on investigating optimal control problems for delay systems in classical settings (\cite{21},\cite{28},\cite{29}). Additionally, the application of the Pontryagin Maximum Principle has been explored for fractional differential equations (\cite{2},\cite{3},\cite{7},\cite{11},\cite{12},\cite{15}, \cite{16},\cite{25},\cite{26}), stochastic cases (\cite{22}-\cite{24},\cite{27}), and Volterra-type integral equations (\cite{1},\cite{4}-\cite{6},\cite{8}-\cite{10},\cite{13},\cite{14},\cite{18},\cite{20})
	\section{Preliminaries}
	In the following passage, we will share initial discoveries that will be helpful for future investigations. Firstly, we will focus on a specific period of time labeled as $T>0$. Then, we will establish certain spaces:
	\begin{align*}
		L^{p}(0,T;R^{n})=\bigg\lbrace \phi :[0,T]\to R^{n} \quad | \quad \phi(\cdot) \quad is \quad measurable, \\
		\Vert \phi(\cdot)\Vert_{p}\equiv \bigg(\int_{0}^{T} \vert \phi(t)\vert^{p}dt\bigg)^{1/p}<\infty\bigg\rbrace, 1\leq p<\infty,
	\end{align*}
	\begin{align*}
		L^{\infty}(0,T;R^{n})=\bigg\lbrace \phi :[0,T]\to R^{n} \quad | \quad \phi(\cdot) \quad is \quad measurable, \quad \Vert \phi(\cdot)\Vert_{\infty}\equiv \esssup_{t\in[0,T]}\vert \phi(t)\vert<\infty\bigg\rbrace.
	\end{align*}
	Also, we define 
	\begin{align*}
		L^{p+}(0,T;R^{n})=\bigcup_{r>p} L^{r}(0,T;R^{n}), \quad  1\leq p<\infty,\\
		L^{p-}(0,T;R^{n})=\bigcap_{r<p} L^{r}(0,T;R^{n}), \quad  1<p<\infty.
	\end{align*}
	In the subsequent analysis, we utilize the notation $\Delta=\lbrace (t,s)\in [0,T]^{2} | 0\leq s<t\leq T\rbrace$. It is important to note that the "diagonal line" represented by $\lbrace (t,t)| t\in [0,T]\rbrace$ does not belong to $\Delta$. Consequently, if we consider a continuous mapping $\phi :\Delta\to R^{n}$ where $(t,s)\mapsto\phi(t,s)$, the function $\phi(\cdot,\cdot)$ may become unbounded as the difference $\vert t-s\vert \to 0$.
	
	Throughout this paper, we adopt the notation $t_{1} \vee t_{2}=\max \lbrace t_{1},t_{2}\rbrace$ and $t_{1} \wedge t_{2}=\min \lbrace t_{1},t_{2}\rbrace$, for any $t_{1},t_{2}\in R$. Notably, $t^{+}=t\vee 0$. The characteristic  function of any set $E$ is denoted by $1_{E(\cdot)}$.
	
	We call a continuous  and strictly increasing function $\omega(\cdot): R_{+}\to R_{+}$ a modulus of continuity if $\omega(0)=0$.
	
\begin{definition}(\cite{19})
	The fractional integral of order $\alpha>0$ for a function $\phi :[0,T]\to R$ is defined by
	\begin{align*}
		(I^{\alpha}_{0+}\phi)(t)=\frac{1}{\Gamma(\alpha)}\int_{0}^{t}(t-\tau)^{\alpha-1}\phi(\tau)d\tau,\quad t>0.
	\end{align*}
where $\Gamma: (0,\infty)\to R$ is the well-known Euler gamma function defined as
	\begin{align*}
	\Gamma(\alpha)=\int_{0}^{\infty}\tau^{\alpha-1}e^{-\tau}d\tau, \quad \alpha>0.
\end{align*}
\end{definition}
\begin{definition}(\cite{19})
	The Riemann-Liouville fractional derivative of the order $0<\alpha\leq1$ for a function $\phi(\cdot)\in L^{\infty}[0,T]$ is defined by
	\begin{align*}
		({^{RL}} D^{\alpha}_{0+}\phi)(t)=\frac{1}{\Gamma(1-\alpha)}\frac{d}{dt}\int_{0}^{t}(t-\tau)^{-\alpha}\phi(\tau)d\tau,\quad t>0.
	\end{align*}
\end{definition}

\begin{definition}(\cite{19})
	The Caputo fractional derivative of the order $0<\alpha\leq1$ for a function $\phi(\cdot)\in L^{\infty}[0,T]$ is defined by
	\begin{align*}
			({^{C}} D^{\alpha}_{0+}\phi)(t)=\frac{1}{\Gamma(1-\alpha)}\int_{0}^{t}(t-\tau)^{-\alpha}\phi^{\prime}(\tau)d\tau,\quad t>0.
	\end{align*}
\end{definition}
The relationship Caputo and Riemann-Liouville fractional differentiation operators for the function $\phi(\cdot)\in L^{\infty}[0,T]$ is as follows:
\begin{align*}
	{^{C}} D^{\alpha}_{0+}\phi(t)={^{RL}} D^{\alpha}_{0+}(\phi(t)-\phi(0)),\quad \alpha \in (0,1].
\end{align*}
	First, we will denote the following necessary theorem, which we use it for proof of the lemma.
	\begin{lemma}(\cite{30})
		Let $\alpha\in(0,1)$ and $q>\frac{1}{\alpha}$. Let $L(\cdot), a(\cdot),y(\cdot)$ be nonnegative functions with $L(\cdot)\in L^{q}(0,T)$ and $a(\cdot),y(\cdot)\in L^{\frac{q}{q-1}}(0,T)$. Assume
		\begin{equation}
			y(t)\leq a(t)+\int_{0}^{t}\frac{L(s)y(s)}{(t-s)^{1-\alpha}}ds+\int_{0}^{t}\frac{L(s)y(s-h)}{(t-s)^{1-\alpha}}ds, \quad a.e. \quad t\in[0,T].
		\end{equation}
		Then there is a constant $K>0$ such that
		\begin{equation}
			y(t)\leq K A(t)\exp\bigg[\int_{0}^{t}L(s)ds+\int_{0}^{t}\gamma(s)ds\bigg], \quad 0\leq t<T,
		\end{equation}
	where 
	\begin{align*}
		A(t)=&a(t)+K_{0}\int_{0}^{t}\frac{L(s)a(s)}{(t-s)^{1-\alpha}}ds+K_{0}\int_{0}^{t}\frac{L(s)a(s-h)}{(t-s)^{1-\alpha}}ds+ K_{1}\int_{0}^{t-h}\frac{L(s)a(s)}{(t-h-s)^{1-\alpha}}ds\\
		+& K_{1}\int_{0}^{t-h}\frac{L(s)a(s-h)}{(t-h-s)^{1-\alpha}}ds
		+ K_{2}\int_{0}^{t-2h}\frac{L(s)a(s)}{(t-2h-s)^{1-\alpha}}ds+ K_{2}\int_{0}^{t-2h}\frac{L(s)a(s-h)}{(t-2h-s)^{1-\alpha}}ds\\
		+&\cdots+K_{n-1}\int_{0}^{t-(n-1)h}\frac{L(s)a(s)}{(t-(n-1)h-s)^{1-\alpha}}ds+ K_{n-1}\int_{0}^{t-(n-1)h}\frac{L(s)a(s-h)}{(t-(n-1)h-s)^{1-\alpha}}ds.\\
	\end{align*}
	\end{lemma}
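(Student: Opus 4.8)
The plan is to reduce the delayed inequality (2.1) to the non-delayed weakly singular Gronwall inequality (the ``necessary theorem'' from \cite{30}) by the classical \emph{method of steps}, exploiting the initial condition $y\equiv 0$ on $[-h,0]$. I would first partition $[0,T]$ into the consecutive slabs $[jh,(j+1)h]$, $j=0,1,\dots,n-1$ with $n=\lceil T/h\rceil$. On the first slab $[0,h]$ the delay term is inert: for $s\in[0,h]$ one has $s-h\le 0$, so $y(s-h)=0$ and the hypothesis collapses to
\begin{equation*}
	y(t)\le a(t)+\int_0^t \frac{L(s)y(s)}{(t-s)^{1-\alpha}}\,ds,\qquad t\in[0,h].
\end{equation*}
Applying the non-delay inequality directly gives a bound of the asserted shape in which only the unshifted integral $\int_0^t \frac{L(s)a(s)}{(t-s)^{1-\alpha}}\,ds$ (and, after extending $a$ by $0$ on $[-h,0]$, its $a(s-h)$ companion, which here vanishes) survives, with constant $K_0$ and the exponential factor $\exp[\int_0^t L+\int_0^t\gamma]$ supplied by \cite{30}.

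Next I would run an induction on $j$. Suppose the claimed bound holds on $[0,jh]$. On $[jh,(j+1)h]$ the delay integral samples $y$ only at arguments $s-h\in[-h,t-h]\subseteq[-h,jh]$, where $y$ is already controlled by the inductive bound. Substituting that bound turns $\int_0^t \frac{L(s)y(s-h)}{(t-s)^{1-\alpha}}\,ds$ into an explicit, known forcing term, so the hypothesis becomes a non-delay inequality $y(t)\le \tilde a(t)+\int_0^t \frac{L(s)y(s)}{(t-s)^{1-\alpha}}\,ds$ with $\tilde a(t)=a(t)+(\text{the substituted delay piece})$. A second application of \cite{30} then bounds $y$ on this slab. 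The shifted kernels $\frac{1}{(t-jh-s)^{1-\alpha}}$ appearing in $A(t)$ are produced precisely by composing the singular kernel $(t-s)^{\alpha-1}$ with the inner singular integrals centered at the delayed argument $s-h$: switching the order of integration and using
\begin{equation*}
	\int_{\sigma}^{t}(t-s)^{\alpha-1}(s-\sigma)^{\alpha-1}\,ds=\frac{\Gamma(\alpha)^2}{\Gamma(2\alpha)}\,(t-\sigma)^{2\alpha-1},\qquad \sigma=\tau+h,
\end{equation*}
collapses each double integral to a single shifted one, feeding the coefficients $K_1,\dots,K_{n-1}$. Aggregating the finitely many slabs reproduces the full sum $A(t)$, the exponential factor is common to every step, and the overall constant $K$ absorbs $K_0,\dots,K_{n-1}$ together with the Gamma constants. (A direct Picard-type iteration of (2.1) is an alternative, but the method of steps keeps the delay bookkeeping transparent.)

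Throughout, finiteness of every singular integral is where $q>1/\alpha$ is essential: by H\"older, $\int_0^t (t-s)^{\alpha-1}L(s)\,ds\le \big\Vert(t-\cdot)^{\alpha-1}\big\Vert_{L^{q/(q-1)}(0,t)}\Vert L\Vert_{L^q}$ is finite exactly because $(\alpha-1)\tfrac{q}{q-1}>-1\iff q>1/\alpha$, while $a,y\in L^{q/(q-1)}$ keep the products integrable; this must be rechecked at each composition to license the Fubini swaps. \textbf{The main obstacle} I anticipate is the bookkeeping in the inductive step. In particular, the composed kernels naturally carry exponent $2\alpha-1$ rather than the stated $\alpha-1$; since each slab has length at most $h$ and $t-jh-s\le T$ with $\alpha>0$, one uses $(t-jh-s)^{2\alpha-1}\le T^{\alpha}(t-jh-s)^{\alpha-1}$ to downgrade them to the stated singular form, at the cost of a constant absorbed into $K_j$. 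Verifying that the shifts align to give exactly the terms of $A(t)$, and that the induced constants remain finite and collapse into a single $K$ over the finite number of steps, is the delicate part of the argument.
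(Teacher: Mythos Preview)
The paper does not supply its own proof of this lemma: it is stated as a preliminary result and attributed to reference~\cite{30} (the authors' own preprint on a delayed Gronwall inequality with weakly singular kernel). There is therefore nothing in the present paper against which your argument can be compared.

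On its merits, your method-of-steps reduction to the non-delayed singular Gronwall inequality is the standard and sound route for this kind of result, and your anticipation of the kernel-exponent downgrade and the Fubini/H\"older checks is accurate. One point to flag: you invoke ``the initial condition $y\equiv 0$ on $[-h,0]$,'' but that hypothesis is absent from the lemma as stated here---the lemma only places $y$ in $L^{q/(q-1)}(0,T)$ and says nothing about its values on $[-h,0]$. In every application the paper makes of the lemma this initial condition does hold, and it is presumably part of the full statement in~\cite{30}, but in your write-up you should add it as an explicit hypothesis rather than assume it silently; without it the first slab does not simplify and the whole induction stalls. (You might also note that the function $\gamma(\cdot)$ appearing in the exponential bound is never defined in the lemma as quoted here; it too must come from~\cite{30}.)
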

Let  for any $\delta>0$, 
\begin{align*}
	\mathscr{E}_{\delta}=\lbrace E\subseteq [0,T] | \vert E\vert =\delta T\rbrace,
\end{align*}
where $\vert E\vert$ stands for the Lebesgue measure of E.
\begin{lemma}(\cite{1})
	Let $\varphi: \Delta \to R^{n}$ be measurable such that
	\begin{align}
\begin{cases}
	\vert \varphi(0,s)\vert\leq \bar{\varphi}(s),\quad 0<s\leq T,\\
	\vert \varphi(t,s)-\varphi(t^{\prime},s)\vert\leq\omega(\vert t-t^{\prime}\vert) \bar{\varphi}(s), \quad (t,s), (t^{\prime},s)\in \Delta.
\end{cases}
\end{align}
for some $\varphi(\cdot)\in L^{q}(0,T)$ with $q>\frac{1}{\alpha}, \alpha\in (0,1)$, and some modulus of continuity $\omega :R_{+}\to R_{+}$. Then 
\begin{align}
	\inf_{E\in \mathscr{E}_{\delta}} \sup_{t\in [0,T]} \bigg\vert \int_{0}^{t}\bigg(\frac{1}{\delta}1_{E}(s)-1\bigg)\frac{\varphi(t,s)}{(t-s)^{1-\alpha}}ds\bigg\vert =0.
\end{align}
\end{lemma}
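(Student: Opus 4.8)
The plan is to convert the supremum over the continuum $t\in[0,T]$ into control of $\Phi_E$ at finitely many times, and then to solve the resulting finite problem by a partition-and-averaging construction of the set $E$. For $E\in\mathscr{E}_\delta$ write $\Phi_E(t)=\int_0^t\big(\frac1\delta 1_E(s)-1\big)\frac{\varphi(t,s)}{(t-s)^{1-\alpha}}\,ds$, so that the goal is $\inf_{E}\sup_{t}|\Phi_E(t)|=0$. Throughout set $q'=\frac{q}{q-1}$ and note that the hypothesis $q>\frac1\alpha$ is precisely what gives $(1-\alpha)q'<1$; hence $s\mapsto (t-s)^{-(1-\alpha)}1_{[0,t]}(s)$ lies in $L^{q'}(0,T)$ with norm bounded uniformly in $t$. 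Combining this with Hölder's inequality and the pointwise domination $|\varphi(t,s)|\le(1+\omega(T))\bar{\varphi}(s)$ that the two hypotheses provide, each $\psi_t(s):=\frac{\varphi(t,s)}{(t-s)^{1-\alpha}}1_{[0,t]}(s)$ belongs to $L^1(0,T)$, and the pieces below are all finite.

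First I would prove that the family $\{\Phi_E\}_{E\in\mathscr{E}_\delta}$ is equicontinuous on $[0,T]$ with a modulus independent of $E$. For $t'<t$ I would split $\Phi_E(t)-\Phi_E(t')$ into an integral over $[t',t]$ and one over $[0,t']$. Using $|\frac1\delta 1_E-1|\le\frac1\delta+1$ and Hölder, the near-diagonal piece over $[t',t]$ is bounded by a constant times $|t-t'|^{(1-(1-\alpha)q')/q'}$. In the piece over $[0,t']$ I would add and subtract to separate the increment of $\varphi$ in $t$, controlled by $\omega(|t-t'|)\bar{\varphi}(s)$ from the second hypothesis, from the increment of the kernel, which is handled by the $L^{q'}$-continuity of $t\mapsto(t-\cdot)^{-(1-\alpha)}1_{[0,t]}$; both rest again on $(1-\alpha)q'<1$. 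Since none of these bounds involve $E$, they yield a common modulus $\tilde\omega$. Equicontinuity then lets me fix, for any $\epsilon>0$, a finite net $0=\tau_0<\cdots<\tau_M=T$ of mesh so small that $\sup_{t}|\Phi_E(t)|\le\max_{0\le k\le M}|\Phi_E(\tau_k)|+\tfrac\epsilon2$ for every admissible $E$.

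It then remains to produce a single $E\in\mathscr{E}_\delta$ making $\max_k|\Phi_E(\tau_k)|<\tfrac\epsilon2$, i.e. to make $\big|\int_0^T(\frac1\delta 1_E-1)\psi_{\tau_k}\big|$ small simultaneously for the finitely many $L^1$ functions $\psi_{\tau_0},\dots,\psi_{\tau_M}$. I would partition $[0,T]$ into $N$ equal intervals $I_1,\dots,I_N$ and choose inside each $I_j$ an arbitrary measurable $E_j\subset I_j$ with $|E_j|=\delta|I_j|$, so that $E=\bigcup_j E_j$ satisfies $|E|=\delta T$ and $\int_{I_j}(\frac1\delta 1_{E_j}-1)\,ds=0$ for every $j$. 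Replacing each $\psi_{\tau_k}$ by its piecewise-constant average $\psi_{\tau_k}^N$ over this partition annihilates the integral exactly, whence $\big|\int_0^T(\frac1\delta 1_E-1)\psi_{\tau_k}\big|\le(\frac1\delta+1)\|\psi_{\tau_k}-\psi_{\tau_k}^N\|_{L^1}$. Because such averages converge in $L^1$ for every $L^1$ function and there are only finitely many indices $k$, I can take $N$ large enough that the right-hand side is below $\tfrac\epsilon2$ for all $k$ at once; any admissible choice of the $E_j$ then works. Chaining the two steps gives $\sup_t|\Phi_E(t)|<\epsilon$ for this $E\in\mathscr{E}_\delta$, and letting $\epsilon\downarrow0$ yields the infimum $0$ (nonnegativity being trivial).

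I expect the main obstacle to be the uniform equicontinuity step, and within it the kernel-increment estimate near the diagonal: showing that $t\mapsto(t-\cdot)^{-(1-\alpha)}1_{[0,t]}$ is continuous in $L^{q'}$ and that the piece over $[t',t]$ is a genuine positive power of $|t-t'|$, both of which hinge on the integrability threshold $q>\frac1\alpha$. By contrast, the finite-dimensional averaging construction and the reduction to the net are comparatively routine once equicontinuity is in hand.
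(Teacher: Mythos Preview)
The paper does not supply its own proof of this lemma; it is quoted verbatim from Lin--Yong \cite{1}. So there is no in-paper argument to compare against.

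Your proposal is correct. The two ingredients---uniform equicontinuity of $\{\Phi_E\}_{E\in\mathscr{E}_\delta}$ to reduce $\sup_t$ to a finite net, followed by a partition-and-averaging construction of $E$ that kills the finitely many integrals $\int(\frac1\delta 1_E-1)\psi_{\tau_k}$---constitute the standard route for this kind of spike-variation estimate. You have located the role of the hypothesis $q>1/\alpha$ precisely: it gives $(1-\alpha)q'<1$, whence $t\mapsto (t-\cdot)^{-(1-\alpha)}1_{[0,t]}$ is bounded in $L^{q'}$ uniformly in $t$ and, by dominated convergence plus compactness of $[0,T]$, uniformly continuous into $L^{q'}$; this carries both the near-diagonal piece and the kernel-increment term in the equicontinuity estimate.

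Two small points for the write-up. First, the equicontinuity modulus $\tilde\omega$ you produce depends on $\delta$ through the factor $\frac1\delta+1$; since $\delta$ is fixed in the statement this is harmless, but it should be said explicitly. Second, the hypothesis $|\varphi(0,s)|\le\bar\varphi(s)$ is literally stated at a point $(0,s)\notin\Delta$; you are right to read it as furnishing a reference value so that, combined with the H\"older-in-$t$ condition, one gets the global bound $|\varphi(t,s)|\le(1+\omega(T))\bar\varphi(s)$ on $\Delta$. In Lin--Yong's original treatment the finite step is handled by a Lyapunov-type measurable selection giving $\int_E\psi_k=\delta\int_0^T\psi_k$ for all $k$ simultaneously; your averaging construction only achieves this approximately, but that is all the infimum $=0$ requires, and your argument is more elementary.
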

\begin{lemma}(\cite{19})
	Let $\alpha\in(0,1,)\quad p\geq1, q\geq1 $ and $\frac{1}{p}+\frac{1}{q}\leq 1+\alpha.$ (If $ \frac{1}{p}+\frac{1}{q}=\alpha+1,$ then $p\not=1 $ and $q\not=1).$If $f(\cdot)\in I^{\alpha}_{b-}(L^{p})$ and $g(\cdot)\in I^{\alpha}_{a+}(L^{q}),$ then 
	\begin{align*}
		\int_{a}^{b}f(t)(D^{\alpha}_{a+}g)(t)dt=\int_{a}^{b}g(t)(D^{\alpha}_{b-}f)(t)dt.
	\end{align*}
\end{lemma}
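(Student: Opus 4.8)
The plan is to reduce the claimed identity to the elementary duality between the left and right Riemann--Liouville fractional integrals and then invoke Fubini's theorem. First I would use the defining property of the image spaces: since $g\in I^{\alpha}_{a+}(L^{q})$ and $f\in I^{\alpha}_{b-}(L^{p})$, there exist $\psi\in L^{q}(a,b)$ and $\phi\in L^{p}(a,b)$ with $g=I^{\alpha}_{a+}\psi$ and $f=I^{\alpha}_{b-}\phi$. On these image spaces the Riemann--Liouville derivative is the left inverse of the corresponding integral, so $D^{\alpha}_{a+}g=\psi$ and $D^{\alpha}_{b-}f=\phi$ almost everywhere. Substituting, the identity to be proved becomes
\[
\int_{a}^{b}(I^{\alpha}_{b-}\phi)(t)\,\psi(t)\,dt=\int_{a}^{b}(I^{\alpha}_{a+}\psi)(t)\,\phi(t)\,dt,
\]
that is, the adjointness of $I^{\alpha}_{a+}$ and $I^{\alpha}_{b-}$ under the natural bilinear pairing.

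Next I would expand both sides with the kernel definitions. Writing $(I^{\alpha}_{a+}\psi)(t)=\frac{1}{\Gamma(\alpha)}\int_{a}^{t}(t-\tau)^{\alpha-1}\psi(\tau)\,d\tau$ and $(I^{\alpha}_{b-}\phi)(\tau)=\frac{1}{\Gamma(\alpha)}\int_{\tau}^{b}(t-\tau)^{\alpha-1}\phi(t)\,dt$, both sides are seen to equal the iterated integral of $\frac{1}{\Gamma(\alpha)}\phi(t)(t-\tau)^{\alpha-1}\psi(\tau)$ over the triangle $\lbrace (t,\tau): a\leq\tau\leq t\leq b\rbrace$, merely taken in the two possible orders. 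Hence the identity is exactly the statement that the order of integration may be exchanged, which Fubini's theorem grants once absolute convergence of the double integral is established.

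The heart of the matter, and the step I expect to be the main obstacle, is verifying this absolute integrability under the sharp exponent hypothesis $\frac{1}{p}+\frac{1}{q}\leq 1+\alpha$. The quantity to control is $\int_{a}^{b}\vert\phi(t)\vert\,(I^{\alpha}_{a+}\vert\psi\vert)(t)\,dt$; by H\"older's inequality with the conjugate exponent $p'$ it is bounded by $\Vert\phi\Vert_{p}\,\Vert I^{\alpha}_{a+}\vert\psi\vert\Vert_{p'}$, so it suffices that $I^{\alpha}_{a+}$ map $L^{q}$ boundedly into $L^{p'}$. By the Hardy--Littlewood--Sobolev fractional integration theorem, $I^{\alpha}_{a+}:L^{q}\to L^{q^{*}}$ with $\frac{1}{q^{*}}=\frac{1}{q}-\alpha$ when $\alpha q<1$ (and into bounded, H\"older-continuous functions when $\alpha q>1$), and on the finite interval $(a,b)$ the inclusion $L^{q^{*}}\subseteq L^{p'}$ holds precisely when $\frac{1}{q^{*}}\leq\frac{1}{p'}$, which rearranges to $\frac{1}{p}+\frac{1}{q}\leq 1+\alpha$. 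The delicate point is the boundary case $\frac{1}{p}+\frac{1}{q}=1+\alpha$: there one sits at the endpoint of the Hardy--Littlewood inequality, where the operator fails to be strongly bounded if $p=1$ or $q=1$ (the endpoint estimate is only of weak type), which is exactly why the statement excludes those two values. With integrability secured in all admitted cases, Fubini applies, the two iterated integrals coincide, and undoing the substitution $\psi=D^{\alpha}_{a+}g$, $\phi=D^{\alpha}_{b-}f$ yields the asserted identity.
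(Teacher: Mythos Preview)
Your argument is correct and follows the standard route found in the cited source: reduce to the duality $\int_a^b (I^{\alpha}_{b-}\phi)\psi = \int_a^b (I^{\alpha}_{a+}\psi)\phi$ via the representation $g=I^{\alpha}_{a+}\psi$, $f=I^{\alpha}_{b-}\phi$ and the inversion $D^{\alpha}_{a+}I^{\alpha}_{a+}=\mathrm{id}$ on $L^q$, then justify Fubini by the Hardy--Littlewood--Sobolev mapping $I^{\alpha}_{a+}:L^q\to L^{p'}$, which is exactly where the exponent condition $\frac{1}{p}+\frac{1}{q}\leq 1+\alpha$ (with the endpoint exclusions $p\neq 1$, $q\neq 1$) enters.

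There is, however, nothing to compare against in the paper itself: the lemma is stated with the citation to Samko--Kilbas--Marichev and no proof is given; it is simply quoted as a known fractional integration-by-parts formula. Your write-up supplies precisely the proof one finds in that reference.
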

	\section{Main results}
Suppose that $U$, a separable metric space with the metric $\rho$. $U$ can be either a nonempty bounded or unbounded set in $R^{m}$ with the metric induced by the usual Euclidean norm. We can view $U$ as a measurable space by considering the Borel $\sigma$-field. Let $u_{0}$ be a fixed element in $U$. For any $p \geq 1$, we define
	\begin{align*}
		U^{p}[0,T]=\lbrace u:[0,T]\to U | u(\cdot) \quad is\quad  measurable\quad  \rho(u(\cdot),u_{0})\in L^{p}(0,T;R)\rbrace.
	\end{align*}
	\textbf{\subsection{\textbf{Pontryagin's maximum principle for the fractional differential equation}}}
	Let us consider the following problem 
	\begin{equation}
		\begin{cases}
			{^{C}}D^{\alpha}_{t}y(t)= f(t,y(t),y(t-h),u(t)), \quad a.e.\quad  t\in [0,T],\\
			y(t)=0,\quad -h\leq t\leq 0,\quad h>0.
		\end{cases}
	\end{equation}
	\begin{equation}
		J(u(\cdot))=\int_{0}^{T} g(t,y(t),y(t-h),u(t))dt.
	\end{equation}
In this paragraph, we negotiate the optimal control problem for (3.1) with payoff functional (3.2) Let us acquaint the following hypothesises. The assumed conditions are more than sufficient.  Without loss generality, we state to utilize these conditions. 

(H1): Let $f: [0,T]\times R^{n}\times R^{n}\times U\rightarrow R^{n}$  be  a transformation with $t\mapsto f(t,y,y_{h},u)$ being measurable, $(y,y_{h})\mapsto f(t,y,y_{h},u)$ being continuously differentiable,  $(y,y_{h},u)\mapsto f(t,y,y_{h},u)$ , $(y,y_{h},u)\mapsto f_{y}(t,y,y_{h},u)$ and  $(y,y_{h},u)\mapsto f_{y_{h}}(t,y,y_{h},u)$being continuous. There exist non-negative functions. $L_{0}(\cdot), L(\cdot)$ with 
\begin{align}
&L_{0}(\cdot)\in L^{\frac{1}{\alpha}{+}}(0,T),\quad L(\cdot)\in L^{\frac{p}{p\alpha-1}{+}}(0,T)
\end{align}
for some $p>\frac{1}{\alpha}$ and $\alpha \in (0,1), u_{0}\in U$.
\begin{align}
	\vert f(t,0,0,u_{0})\vert \leq L_{0}(t), \quad t\in [0,T],
\end{align}
\begin{align}
	\vert f(t,y,y_{h},u)-f(t,y^{\prime},y^{\prime}_{h},u^{\prime})\vert
	 \leq L(t)[\vert y-y^{\prime}\vert+\vert y_{h}-y^{\prime}_{h}\vert+\rho(u,u^{\prime})],\quad t\in [0,T], \quad y,y^{\prime}, y_{h},y^{\prime}_{h}\in R^{n}, \quad u,u^{\prime}\in U.
\end{align}
We point out (3.4)-(3.5) declare
\begin{align}
	\vert  f(t,y,y_{h},u)\vert \leq L_{0}(t)+L(t)[\vert y\vert +\vert y_{h}\vert +\rho(u,u_{0})],\quad (t,y,y_{h},u)\in [0,T]\times R^{n}\times R^{n}
	\times U.
\end{align}
\begin{align}
	\vert f(t,y,y_{h},u)-f(t^{\prime},y,y_{h},u)\vert\leq K\omega (\vert t-t^{\prime}\vert)(1+\vert y\vert +\vert y_{h}\vert), \quad t,t^{\prime}\in[0,T],\quad y,y_{h}\in R^{n}, \quad u\in U,
\end{align}
for some modulus of continuity $\omega(\cdot)$.
Moreover, it is evident that $L$ is included in a smaller space, compared to the space to which $L_{0}$ belongs.

(H2): Let $g: [0,T]\times R^{n}\times R^{n}\times U\rightarrow R$ be a transformation with $t\mapsto g(t,y,y_{h},u)$ being measurable, $(y,y_{h})\mapsto g(t,y,y_{h},u)$ being continuously differentiable, and $(y,y_{h},u)\mapsto (g(t,y,y_{h},u),g_{y}(t,y,y_{h},u),g_{y_{h}}(t,y,y_{h},u))$ being continuous. There is a constant $L>0$ such that 
\begin{align*}
	\vert g(t,0,0,u)\vert +	\vert g_{y}(t,0,0,u)\vert+	\vert g_{y_{h}}(t,0,0,u)\vert\leq L, \quad (t,y,y_{h},u)\in [0,T]\times R^{n}\times R^{n}\times U.
\end{align*}
Obviously, the payoff functional (3.2) is well-defined under (H1) and (H2). Thus, we are able to consider the following optimal control problem (OCP).

\textbf{Problem (P)} Find a $u^{*}(\cdot)\in U^{p}[0,T]$ such that 
\begin{align}
	J(u^{*}(\cdot))=\inf_{u(\cdot)\in U^{p}[0,T]}J(u(\cdot)).
\end{align}
Arbitrary $u^{*}(\cdot)$ satisfying (3.8) is called an optimal control of Problem (P), the appropriate state $y^{*}(\cdot)$ is called an optimal state, and $(y^{*},u^{*})$ is called optimal pair. 

Now, Pontryagin's maximum principle can be given for the Problem (P).
\begin{theorem} Let (H1) and (H2) satisfy. Assume $(y^{*}(\cdot),u^{*}(\cdot))$  is an optimal pair of Problem(P). Then there is a solution $\varphi\in L^{\frac{p}{p-1}}(0,T;R^{n})$ of the following adjoint equation
\begin{align}
	\begin{cases}
		{^{C}}D^{\alpha}_{T}\psi(t)=-g_{y}(t,y^{*}(t),y^{*}(t-h),u^{*}(t))-\chi_{(0,T-h)}(t) g_{y_{h}}(t+h,y^{*}(t+h),y^{*}(t),u^{*}(t+h))\\
		+f_{y}(t,y^{*}(t),y^{*}(t-h),u^{*}(t))\psi(t)+\chi_{(0,T-h)}(t)f_{y_{h}}(t+h,y^{*}(t+h),y^{*}(t),u^{*}(t+h))\psi(t+h),\\
		\psi(t)=0,\quad T\leq t\leq T+h,\quad h>0,
	\end{cases}
\end{align}
such that the following maximum condition satisfies:
\begin{align}
	&\psi^{\top}(t)f(t,y^{*}(t),y^{*}(t-h),u^{*}(t))-g(t,y^{*}(t),y^{*}(t-h),u^{*}(t))\nonumber\\
	=&\max_{u\in U}\big[\psi^{\top}(t)f(t,y^{*}(t),y^{*}(t-h),u(t))-g(t,y^{*}(t),y^{*}(t-h),u(t))\big], \quad \forall u\in U, \quad a.e. \quad t\in [0,T].
\end{align}
\end{theorem}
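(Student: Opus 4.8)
The plan is to follow the spike-variation and adjoint-duality route, adapted to the weakly singular, delayed setting by means of Lemmas 2.1--2.3. First I would recast the state equation in its equivalent integral form: since $y(0)=0$, applying $I^{\alpha}_{0+}$ to (3.1) produces the weakly singular Volterra delay equation
\[
y(t)=\frac{1}{\Gamma(\alpha)}\int_{0}^{t}(t-s)^{\alpha-1}f(s,y(s),y(s-h),u(s))\,ds,
\]
to which the generalized Gronwall inequality (Lemma 2.1) applies under (H1); this furnishes well-posedness together with the a priori bounds and integrability of $y^{*}$ used below. I would then fix the Hamiltonian
\[
H(t,y,y_{h},u,\psi)=\psi^{\top}f(t,y,y_{h},u)-g(t,y,y_{h},u),
\]
so that the desired condition (3.10) reads $H(t,y^{*}(t),y^{*}(t-h),u^{*}(t),\psi(t))=\max_{u\in U}H(t,y^{*}(t),y^{*}(t-h),u,\psi(t))$ a.e.

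Next I would introduce a diffuse spike variation. For arbitrary $u\in U^{p}[0,T]$, each $\delta>0$ and each $E\in\mathscr{E}_{\delta}$, set $u^{\delta}=u^{*}\chi_{E^{c}}+u\,\chi_{E}$ and let $y^{\delta}$ be the corresponding state. Writing $z^{\delta}=y^{\delta}-y^{*}$, subtracting the integral equations and linearizing via the $C^{1}$-smoothness and Lipschitz bounds in (H1) gives
\[
{}^{C}D^{\alpha}_{0+}z^{\delta}(t)=f_{y}(t,y^{*}(t),y^{*}(t-h),u^{*}(t))\,z^{\delta}(t)+f_{y_{h}}(t,y^{*}(t),y^{*}(t-h),u^{*}(t))\,z^{\delta}(t-h)+\Delta_{f}(t),
\]
where $\Delta_{f}(t)=f(t,y^{*}(t),y^{*}(t-h),u^{\delta}(t))-f(t,y^{*}(t),y^{*}(t-h),u^{*}(t))$ is supported on $E$, up to a remainder that is higher order in $\|z^{\delta}\|$. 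Passing to the integral form and invoking Lemma 2.1 once more yields the uniform estimate $\|z^{\delta}\|=O(\delta)$ with error $o(\delta)$, where the Hölder bookkeeping between $L^{p}$ (for $z^{\delta}$ and the $f$-terms) and $L^{\frac{p}{p-1}}$ (for $\psi$) guarantees all pairings are finite.

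For the adjoint, I would first note that (3.9) is a linear backward fractional delay equation; reversing time $t\mapsto T-t$ recasts it as a forward weakly singular Volterra delay equation, so Lemma 2.1 again gives a unique solution $\psi\in L^{\frac{p}{p-1}}(0,T;R^{n})$. I would then pair the variational equation with $\psi$ and integrate. Since $z^{\delta}(0)=0$ and $\psi(T)=0$, the Caputo derivatives coincide with their Riemann--Liouville counterparts (by the identity stated before Lemma 2.1), so Lemma 2.3 transfers ${}^{C}D^{\alpha}_{0+}$ off $z^{\delta}$ onto $\psi$ as ${}^{C}D^{\alpha}_{T}$ with no boundary contribution. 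The delay pairings $\int_{0}^{T}\psi^{\top}f_{y_{h}}(t,\dots)z^{\delta}(t-h)\,dt$ and $\int_{0}^{T}g_{y_{h}}(t,\dots)z^{\delta}(t-h)\,dt$ are handled by the change of variable $t\mapsto t+h$ together with $z^{\delta}|_{[-h,0]}=0$; this is exactly what generates the terms $\chi_{(0,T-h)}(t)f_{y_{h}}(t+h,y^{*}(t+h),y^{*}(t),u^{*}(t+h))\psi(t+h)$ and $\chi_{(0,T-h)}(t)g_{y_{h}}(t+h,\dots)$ appearing in (3.9). With (3.9) chosen precisely so that every $z^{\delta}$-term cancels, I am left with
\[
J(u^{\delta})-J(u^{*})=-\int_{E}\big[H(t,y^{*}(t),y^{*}(t-h),u(t),\psi(t))-H(t,y^{*}(t),y^{*}(t-h),u^{*}(t),\psi(t))\big]\,dt+o(\delta).
\]

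Finally, optimality of $u^{*}$ forces $J(u^{\delta})-J(u^{*})\geq 0$, so the bracketed integrand integrates to at most $o(\delta)$ over $E$. The remaining and hardest step is to pass from this integrated inequality to the pointwise statement (3.10). A naive needle variation concentrated at a Lebesgue point fails here, because the kernel $(t-s)^{\alpha-1}$ is unbounded and the state response does not localize. This is exactly where Lemma 2.2 is essential: it allows me to select the sets $E\in\mathscr{E}_{\delta}$ so that the singular part of the state variation is uniformly negligible as $\delta\to 0$, after which dividing by $\delta$ and letting $\delta\to 0$ localizes the inequality to $H(t,\dots,u^{*}(t),\psi(t))\geq H(t,\dots,u(t),\psi(t))$ for a.e. $t$. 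Running this over a countable dense family of admissible controls then upgrades the inequality to the full maximum condition (3.10). I expect the two genuine obstacles to be (i) the uniform control of the singular variational remainder through Lemma 2.2, which drives the localization, and (ii) the integrability accounting that legitimizes the duality pairing and the application of Lemma 2.3 in the dual spaces $L^{p}$ and $L^{\frac{p}{p-1}}$.
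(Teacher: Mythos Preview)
Your overall architecture matches the paper's: integral reformulation, spike variation on sets $E_\delta$, Gronwall via Lemma~2.1, fractional integration by parts via Lemma~2.3, and the delay shift $t\mapsto t+h$ producing the advanced $\chi_{(0,T-h)}$ terms in the adjoint. The paper carries out exactly these computations.

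The gap is in the last step, the passage to the pointwise maximum condition. You say Lemma~2.2 is what localizes the inequality and that a Lebesgue-point needle ``fails here, because the kernel $(t-s)^{\alpha-1}$ is unbounded.'' This is backwards. Lemma~2.2 does not localize; it \emph{globalizes}. When $E_\delta$ is selected by Lemma~2.2, the effect of $\tfrac{1}{\delta}1_{E_\delta}$ inside the singular integral is indistinguishable from the constant $1$, so $Y^\delta=z^\delta/\delta$ converges to the solution $Y$ of the variational equation driven by $\widehat f$ on the \emph{whole} interval, not by a point mass. After duality with $\psi$ you obtain, for every $u(\cdot)\in U^{p}[0,T]$, the global inequality
\[
0\le \int_0^T\bigl[\Psi(s,u^*(s))-\Psi(s,u(s))\bigr]\,ds,\qquad \Psi(s,v)=\psi(s)^{\top}f(s,y^*(s),y^*(s-h),v)-g(s,y^*(s),y^*(s-h),v),
\]
not a pointwise statement. (Incidentally, your estimate $\|z^\delta\|=O(\delta)$ already requires Lemma~2.2; Lemma~2.1 alone cannot give it for an arbitrary $E$ of measure $\delta T$, since $\int_0^t 1_E(s)(t-s)^{\alpha-1}|\widehat f(s)|\,ds$ need not be $O(\delta)$ near the singularity.)

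The localization is then achieved in the paper by a \emph{second}, ordinary needle variation applied to this global inequality: fix $v\in U$, choose a Lebesgue point $s_0$ of the integrable function $s\mapsto\Psi(s,u^*(s))-\Psi(s,v)$, take $u(s)=u^*(s)$ off $(s_0-\delta,s_0+\delta)$ and $u(s)=v$ on it, divide by $2\delta$, and let $\delta\to0$. At this stage there is no singular kernel left---duality has absorbed the state response into $\psi$, and the integrand is merely $L^1$ in $s$---so the standard Lebesgue differentiation theorem applies and gives $\Psi(s_0,u^*(s_0))\ge\Psi(s_0,v)$. Your worry about the needle failing is valid only at the level of the primal state perturbation, which is why Lemma~2.2 is used \emph{there}; it is irrelevant once the dual inequality has been reached.
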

\begin{proof} Proof of theorem consists of two steps.
	
	\textbf{step1.} \textbf{\textit{A variational inequality.}} Let $(y^{*}(\cdot),u^{*}(\cdot))$  be an optimal pair of Problem(P). Fix an arbitrary $u(\cdot)\in U^{p}[0,T]$. Insert
	\begin{align}
		\begin{cases}
			\widehat{f}(s)=f(s,y^{*}(s),y^{*}(s-h),u(s))-f(s,y^{*}(s),y^{*}(s-h),u^{*}(s))\\
			\widehat{g}(s)=g(s,y^{*}(s),y^{*}(s-h),u(s))-g(s,y^{*}(s),y^{*}(s-h),u^{*}(s)), \quad s \in [0,T].
		\end{cases}
	\end{align}
Taking
\begin{align*}
	\varphi(s)=\widehat{f}(s), \quad h(s)=\widehat{g}(s), \quad \quad s \in [0,T].
\end{align*}
and by using lemma 2.2 , accordingly, for arbitrary $\delta >0$, there exists an $E_{\delta}\in \mathcal{E_{\delta}}$ such that
	\begin{align}
	\begin{cases}
		\vert \int_{0}^{t}\bigg(\frac{1}{\delta} 1_{E_{\delta}}(s)-1\bigg)\frac{\widehat{f}(s)}{(t-s)^{1-\alpha}}ds\vert \leq \delta, \quad t\in [0,T]\\
		\vert \int_{0}^{T}\bigg(\frac{1}{\delta} 1_{E_{\delta}}(s)-1\bigg)\widehat{g}(s)ds\vert \leq \delta.
	\end{cases}
\end{align}
Signify
\begin{align}
	u^{\delta}(t)=
	\begin{cases}
		u^{*}(t),\quad t\in [0,T]\setminus E_{\delta},\\
		u(t),\quad t\in E_{\delta}.
	\end{cases}
\end{align}
Obviously, $	u^{\delta}(\cdot)\in U^{p}[0,T]$. Set $y^{\delta}(\cdot)=y(\cdot ;y_{0}, u^{\delta}(\cdot))$ be the appropriate state. Then 
	\begin{align*}
		y^{\delta}(t)-y^{*}(t)=&\frac{1}{\Gamma(\alpha)} \int_{0}^{t} \frac{f(s,y^{\delta}(s),y^{\delta}(s-h),u^{\delta}(s))-f(s,y^{*}(s),y^{*}(s-h),u^{*}(s))}{(t-s)^{1-\alpha}}ds\\
		=&\frac{1}{\Gamma(\alpha)} \int_{0}^{t} \frac{f(s,y^{\delta}(s),y^{\delta}(s-h),u^{\delta}(s))-f(s,y^{*}(s),y^{*}(s-h),u^{\delta}(s))}{(t-s)^{1-\alpha}}ds\\
		+&\frac{1}{\Gamma(\alpha)} \int_{0}^{t} \frac{f(s,y^{*}(s),y^{*}(s-h),u^{\delta}(s))-f(s,y^{*}(s),y^{*}(s-h),u^{*}(s))}{(t-s)^{1-\alpha}}ds\\
		=&\frac{1}{\Gamma(\alpha)}\int_{0}^{t} \Bigg(\frac{f^{\delta}_{y}(s)}{(t-s)^{1-\alpha}}[y^{\delta}(s)-y^{*}(s)]+\frac{f^{\delta}_{y_{h}}(s)}{(t-s)^{1-\alpha}}[y^{\delta}(s-h)-y^{*}(s-h)]\\
		+&1_{E_{\delta}}(s)\frac{\widehat{f}(s)}{(t-s)^{1-\alpha}}\Bigg)ds, \quad t\in [0,T].
	\end{align*}
where $\widehat{f}(\cdot)$ is denoted the system of (3.11), and
\begin{align*}
	f^{\delta}_{y}(s)=\int_{0}^{1} f_{y}(s,y^{*}(s)+\tau(y^{\delta}(s)-y^{*}(s),y^{*}(s-h)+\tau(y^{\delta}(s-h)-y^{*}(s-h)),u^{\delta}(s))d\tau\\
		f^{\delta}_{y_{h}}(s)=\int_{0}^{1} f_{y_{h}}(s,y^{*}(s)+\tau(y^{\delta}(s)-y^{*}(s),y^{*}(s-h)+\tau(y^{\delta}(s-h)-y^{*}(s-h)),u^{\delta}(s))d\tau
\end{align*}
By using (H1), we obtain 
\begin{align*}
	&\vert\widehat{f}(s)\vert=\vert f(s,y^{*}(s),y^{*}(s-h),u(s))-f(s,y^{*}(s),y^{*}(s-h),u^{*}(s))\vert\\
	\leq&2 L_{0}(s)+L(s)[2\vert y^{*}(s)\vert+2\vert y^{*}(s-h)\vert+\rho(u(s),u_{0})+\rho(u^{*}(s),u_{0})]\equiv\varphi(s), \quad s\in [0,T],\\
	&\vert f^{\delta}_{y}(s)\vert\leq L(s), \quad \vert	f^{\delta}_{y_{h}}(s)\vert\leq L(s), \quad s\in [0,T].
\end{align*}
Obviously, $\varphi(\cdot)\in L^{q}(0,T)$ for some $q\in (\frac{1}{\alpha},p)$. We get
\begin{align*}
\vert	y^{\delta}(t)-y^{*}(t)\vert\leq &\frac{1}{\Gamma(\alpha)}\int_{0}^{t} \frac{L(s)}{(t-s)^{1-\alpha}}\vert y^{\delta}(s)-y^{*}(s)\vert ds+\frac{1}{\Gamma(\alpha)}\int_{0}^{t}\frac{L(s)}{(t-s)^{1-\alpha}}\vert y^{\delta}(s-h)-y^{*}(s-h)\vert ds\\
+&\frac{1}{\Gamma(\alpha)}\int_{0}^{t}1_{E_{\delta}}(s)\frac{\varphi(s)}{(t-s)^{1-\alpha}}ds, \quad t\in [0,T].
\end{align*}
By applying the Gronwall's inequality (Lemma 2.1), selecting $q^{\prime}\in (\frac{1}{\alpha},q)$, and using Hölder inequality,\\ we have
\begin{align*}
\vert	y^{\delta}(t)-y^{*}(t)\vert\leq &\int_{0}^{t}1_{E_{\delta}}(s)\frac{\varphi(s)}{(t-s)^{1-\alpha}}ds+K\int_{0}^{t} \frac{L(s)}{(t-s)^{1-\alpha}}\int_{0}^{s}1_{E_{\delta}}(\tau)\frac{\varphi(\tau)}{(s-\tau)^{1-\alpha}}d\tau ds\\
+&K\int_{0}^{t} \frac{L(s)}{(t-s)^{1-\alpha}}\int_{0}^{s-h}1_{E_{\delta}}(\tau)\frac{\varphi(\tau)}{(s-h-\tau)^{1-\alpha}}d\tau ds\\
+&K\int_{0}^{t-h} \frac{L(s)}{(t-h-s)^{1-\alpha}}\int_{0}^{s}1_{E_{\delta}}(\tau)\frac{\varphi(\tau)}{(s-\tau)^{1-\alpha}}d\tau ds\\
+&K\int_{0}^{t-h} \frac{L(s)}{(t-h-s)^{1-\alpha}}\int_{0}^{s-h}1_{E_{\delta}}(\tau)\frac{\varphi(\tau)}{(s-h-\tau)^{1-\alpha}}d\tau ds\\
+&\cdots+K\int_{0}^{t-(n-1)h} \frac{L(s)}{(t-(n-1)h-s)^{1-\alpha}}\int_{0}^{s}1_{E_{\delta}}(\tau)\frac{\varphi(\tau)}{(s-\tau)^{1-\alpha}}d\tau ds\\
+&K\int_{0}^{t-(n-1)h} \frac{L(s)}{(t-(n-1)h-s)^{1-\alpha}}\int_{0}^{s-h}1_{E_{\delta}}(\tau)\frac{\varphi(\tau)}{(s-h-\tau)^{1-\alpha}}d\tau ds\\
\leq&\bigg(\int_{0}^{t}1_{E_{\delta}}(s)^{q^{\prime}} \varphi(s)^{q^{\prime}}ds\bigg)^{\frac{1}{q^{\prime}}}+K\int_{0}^{t}\frac{L(s)}{(t-s)^{1-\alpha}}\bigg(\int_{0}^{s}1_{E_{\delta}}(\tau)^{q^{\prime}} \varphi(\tau)^{q^{\prime}}d\tau\bigg)^{\frac{1}{q^{\prime}}}ds\\
+&K\int_{0}^{t}\frac{L(s)}{(t-s)^{1-\alpha}}\bigg(\int_{0}^{s-h}1_{E_{\delta}}(\tau)^{q^{\prime}} \varphi(\tau)^{q^{\prime}}d\tau\bigg)^{\frac{1}{q^{\prime}}}ds\\
+&K\int_{0}^{t-h}\frac{L(s)}{(t-h-s)^{1-\alpha}}\bigg(\int_{0}^{s}1_{E_{\delta}}(\tau)^{q^{\prime}} \varphi(\tau)^{q^{\prime}}d\tau\bigg)^{\frac{1}{q^{\prime}}}ds\\
+&K\int_{0}^{t-h}\frac{L(s)}{(t-h-s)^{1-\alpha}}\bigg(\int_{0}^{s-h}1_{E_{\delta}}(\tau)^{q^{\prime}} \varphi(\tau)^{q^{\prime}}d\tau\bigg)^{\frac{1}{q^{\prime}}}ds\\
+&\cdots+K\int_{0}^{t-(n-1)h}\frac{L(s)}{(t-(n-1)h-s)^{1-\alpha}}\bigg(\int_{0}^{s}1_{E_{\delta}}(\tau)^{q^{\prime}} \varphi(\tau)^{q^{\prime}}d\tau\bigg)^{\frac{1}{q^{\prime}}}ds\\
+&K\int_{0}^{t-(n-1)h}\frac{L(s)}{(t-(n-1)h-s)^{1-\alpha}}\bigg(\int_{0}^{s-h}1_{E_{\delta}}(\tau)^{q^{\prime}} \varphi(\tau)^{q^{\prime}}d\tau\bigg)^{\frac{1}{q^{\prime}}}ds\\
\leq& K^{\prime}\vert E_{\delta}\vert^{\frac{q-q^{\prime}}{q^{\prime}q}}\leq K^{\prime}\delta^{\frac{q-q^{\prime}}{q^{\prime}q}}\to 0
\end{align*}
as $\delta \to 0$, uniformly in $t\in [0,T]$. Denote
\begin{align*}
	Y^{\delta}(t)=\frac{y^{\delta}(t)-y^{*}(t)}{\delta}, \quad t\in [0,T].
\end{align*}
It follows $Y^{\delta}(\cdot)$ be a solution of the following :
\begin{align*}
		Y^{\delta}(t)=&\frac{1}{\Gamma(\alpha)}\int_{0}^{t}\bigg[ \frac{f^{\delta}_{y}(s)}{(t-s)^{1-\alpha}}Y^{\delta}(s)+\frac{f^{\delta}_{y_{h}}(s)}{(t-s)^{1-\alpha}}Y^{\delta}(s-h)+\frac{1_{E_{\delta}}(s)}{\delta}\frac{\widehat{f}(s)}{(t-s)^{1-\alpha}}\bigg]ds, \quad t\in [0,T].
\end{align*}
Let be a solution of the following variational equation
\begin{align}
	Y(t)=&\frac{1}{\Gamma(\alpha)}\int_{0}^{t}\bigg[ \frac{f_{y}(s,y^{*}(s),y^{*}(s-h),u^{*}(s))}{(t-s)^{1-\alpha}}Y(s)+\frac{f_{y_{h}}(s,y^{*}(s),y^{*}(s-h),u^{*}(s))}{(t-s)^{1-\alpha}}Y(s-h)+\frac{\widehat{f}(s)}{(t-s)^{1-\alpha}}\bigg]ds
\end{align}
We get
\begin{align*}
	\vert 	Y(t)\vert \leq&\frac{1}{\Gamma(\alpha)}\int_{0}^{t}\frac{\varphi(s)}{(t-s)^{1-\alpha}}ds+\frac{1}{\Gamma(\alpha)}\int_{0}^{t}\frac{L(s)}{(t-s)^{1-\alpha}}\vert Y(s)\vert ds+\frac{1}{\Gamma(\alpha)}\int_{0}^{t}\frac{L(s)}{(t-s)^{1-\alpha}}\vert Y(s-h)\vert ds, \quad t\in [0,T].
\end{align*}
By applying the Gronwall's inequality (Lemma 2.1), mention $q>\frac{1}{\alpha}$ and $L(\cdot)\in L^{\frac{1}{\alpha}+}(0,T)$,
\begin{align*}
		\vert 	Y(t)\vert \leq&\frac{1}{\Gamma(\alpha)}\int_{0}^{t}\frac{\varphi(s)}{(t-s)^{1-\alpha}}ds+\frac{K}{(\Gamma(\alpha))^{2}}\int_{0}^{t}\frac{L(s)}{(t-s)^{1-\alpha}}\int_{0}^{s}\frac{\varphi(\tau)}{(s-\tau)^{1-\alpha}}d\tau ds\\
		+&\frac{K}{(\Gamma(\alpha))^{2}}\int_{0}^{t}\frac{L(s)}{(t-s)^{1-\alpha}}\int_{0}^{s-h}\frac{\varphi(\tau)}{(s-h-\tau)^{1-\alpha}}d\tau ds\\
		+&\frac{K}{(\Gamma(\alpha))^{2}}\int_{0}^{t-h}\frac{L(s)}{(t-h-s)^{1-\alpha}}\int_{0}^{s}\frac{\varphi(\tau)}{(s-\tau)^{1-\alpha}}d\tau ds\\
		+&\frac{K}{(\Gamma(\alpha))^{2}}\int_{0}^{t-h}\frac{L(s)}{(t-h-s)^{1-\alpha}}\int_{0}^{s-h}\frac{\varphi(\tau)}{(s-h-\tau)^{1-\alpha}}d\tau ds\\
		+&\cdots+\frac{K}{(\Gamma(\alpha))^{2}}\int_{0}^{t-(n-1)h}\frac{L(s)}{(t-(n-1)h-s)^{1-\alpha}}\int_{0}^{s}\frac{\varphi(\tau)}{(s-\tau)^{1-\alpha}}d\tau ds\\
		+&\frac{K}{(\Gamma(\alpha))^{2}}\int_{0}^{t-(n-1)h}\frac{L(s)}{(t-(n-1)h-s)^{1-\alpha}}\int_{0}^{s-(n-1)h}\frac{\varphi(\tau)}{(s-h-\tau)^{1-\alpha}}d\tau ds\\
		\leq& K^{\prime}\Vert \varphi(\cdot)\Vert_{q}, \quad t\in[0,T]. 
\end{align*}
Therefore, 
\begin{align*}
	Y^{\delta}(t)-Y(t)=&\frac{1}{\Gamma(\alpha)}\int_{0}^{t}\bigg[ \frac{f^{\delta}_{y}(s)}{(t-s)^{1-\alpha}}Y^{\delta}(s)-\frac{f_{y}(s,y^{*}(s),y^{*}(s-h),u^{*}(s))}{(t-s)^{1-\alpha}}Y(s)\bigg]ds\\
	+&\frac{1}{\Gamma(\alpha)}\int_{0}^{t}\bigg[\frac{f^{\delta}_{y_{h}}(s)}{(t-s)^{1-\alpha}}Y^{\delta}(s-h)-\frac{f_{y_{h}}(s,y^{*}(s),y^{*}(s-h),u^{*}(s))}{(t-s)^{1-\alpha}}Y(s-h)\bigg]ds\\
	+&\frac{1}{\Gamma(\alpha)}\int_{0}^{t}\bigg(\frac{1_{E_{\delta}}(s)}{\delta}-1\bigg)\frac{\widehat{f}(s)}{(t-s)^{1-\alpha}}ds=\frac{1}{\Gamma(\alpha)}\int_{0}^{t}\frac{f^{\delta}_{y}(s)}{(t-s)^{1-\alpha}}[Y^{\delta}(s)-Y(s)]ds\\
	+&\frac{1}{\Gamma(\alpha)}\int_{0}^{t}\frac{f^{\delta}_{y}(s)-f_{y}(s,y^{*}(s),y^{*}(s-h),u^{*}(s))}{(t-s)^{1-\alpha}}Y(s)ds+\int_{0}^{t}\frac{f^{\delta}_{y_{h}}(s)}{(t-s)^{1-\alpha}}[Y^{\delta}(s-h)-Y(s-h)]ds\\
	+&\frac{1}{\Gamma(\alpha)}\int_{0}^{t}\frac{f^{\delta}_{y_{h}}(s)-f_{y_{h}}(s,y^{*}(s),y^{*}(s-h),u^{*}(s))}{(t-s)^{1-\alpha}}Y(s-h)ds+\int_{0}^{t}\bigg(\frac{1_{E_{\delta}}(s)}{\delta}-1\bigg)\frac{\widehat{f}(s)}{(t-s)^{1-\alpha}}ds\\
	\equiv&\frac{1}{\Gamma(\alpha)}\int_{0}^{t}\frac{f^{\delta}_{y}(s)}{(t-s)^{1-\alpha}}[Y^{\delta}(s)-Y(s)]ds+\frac{1}{\Gamma(\alpha)}\int_{0}^{t}\frac{f^{\delta}_{y_{h}}(s)}{(t-s)^{1-\alpha}}[Y^{\delta}(s-h)-Y(s-h)]ds\\
	+&a^{\delta}_{1}(t)+a^{\delta}_{2}(t)+a^{\delta}_{3}(t), \quad t\in [0,T].
\end{align*}
Since
\begin{align*}
&\Big\vert	\frac{f^{\delta}_{y}(s)-f_{y}(s,y^{*}(s),y^{*}(s-h),u^{*}(s))}{(t-s)^{1-\alpha}}Y(s)\Big\vert\leq\frac{2L(s)}{(t-s)^{1-\alpha}}\vert Y(s)\vert, \quad a.e.\quad s\in[0,t),\\
&\Big\vert\frac{f^{\delta}_{y_{h}}(s)-f_{y_{h}}(s,y^{*}(s),y^{*}(s-h),u^{*}(s))}{(t-s)^{1-\alpha}}Y(s-h)\Big\vert\leq\frac{2L(s)}{(t-s)^{1-\alpha}}\vert Y(s-h)\vert, \quad a.e.\quad s\in[0,t).
\end{align*}
with
\begin{align*}
	\int_{0}^{t}\frac{2L(s)}{(t-s)^{1-\alpha}}\vert Y(s)\vert ds<\infty,\\
	\int_{0}^{t}\frac{2L(s)}{(t-s)^{1-\alpha}}\vert Y(s-h)\vert ds<\infty.
\end{align*}
By using the dominated convergence theorem, we get
\begin{align*}
	\lim_{\delta\to 0}a^{\delta}_{1}(t)=0, \quad \lim_{\delta\to 0}a^{\delta}_{2}(t)=0, \quad t\in [0,T].
\end{align*}
In addition, by (3.12),
\begin{align*}
	a^{\delta}_{3}(t)\equiv \Big\vert \frac{1}{\Gamma(\alpha)}\int_{0}^{t}\bigg(\frac{1_{E_{\delta}}(s)}{\delta}-1\bigg)\frac{\widehat{f}(s)}{(t-s)^{1-\alpha}}ds  \Big\vert\leq \delta, \quad t\in [0,T].
\end{align*}
Therefore,
\begin{align*}
	\vert Y^{\delta}(t)-Y(t) \vert\leq&\frac{1}{\Gamma(\alpha)}\int_{0}^{t}\frac{L(s)}{(t-s)^{1-\alpha}}\vert Y^{\delta}(s)-Y(s)\vert ds+ \frac{1}{\Gamma(\alpha)}\int_{0}^{t}\frac{L(s)}{(t-s)^{1-\alpha}}\vert Y^{\delta}(s-h)-Y(s-h)\vert ds\\
	+&\vert a^{\delta}_{1}(t)\vert+\vert a^{\delta}_{2}(t)\vert+\vert a^{\delta}_{3}(t)\vert, \quad t\in [0,T].
\end{align*}
Applying the Gronwall's inequality,
\begin{align*}
	\vert Y^{\delta}(t)-Y(t) \vert\leq&\vert a^{\delta}_{1}(t)\vert+\vert a^{\delta}_{2}(t)\vert+\vert a^{\delta}_{3}(t)\vert+\frac{K}{\Gamma(\alpha)}\int_{0}^{t}\frac{L(s)}{(t-s)^{1-\alpha}}(\vert a^{\delta}_{1}(s)\vert+\vert a^{\delta}_{2}(s)\vert+\vert a^{\delta}_{3}(s)\vert) ds\\
	+&\frac{K}{\Gamma(\alpha)} \int_{0}^{t}\frac{L(s)}{(t-s)^{1-\alpha}}(\vert a^{\delta}_{1}(s-h)\vert+\vert a^{\delta}_{2}(s-h)\vert+\vert a^{\delta}_{3}(s-h)\vert) ds\\
	+&\frac{K}{\Gamma(\alpha)}\int_{0}^{t-h}\frac{L(s)}{(t-h-s)^{1-\alpha}}(\vert a^{\delta}_{1}(s)\vert+\vert a^{\delta}_{2}(s)\vert+\vert a^{\delta}_{3}(s)\vert) ds\\
	+&\frac{K}{\Gamma(\alpha)} \int_{0}^{t-h}\frac{L(s)}{(t-h-s)^{1-\alpha}}(\vert a^{\delta}_{1}(s-h)\vert+\vert a^{\delta}_{2}(s-h)\vert+\vert a^{\delta}_{3}(s-h)\vert) ds\\
	+&\cdots +\frac{K}{\Gamma(\alpha)}\int_{0}^{t-(n-1)h}\frac{L(s)}{(t-(n-1)h-s)^{1-\alpha}}(\vert a^{\delta}_{1}(s)\vert+\vert a^{\delta}_{2}(s)\vert+\vert a^{\delta}_{3}(s)\vert) ds\\
	+&\frac{K}{\Gamma(\alpha)} \int_{0}^{t-(n-1)h}\frac{L(s)}{(t-(n-1)h-s)^{1-\alpha}}(\vert a^{\delta}_{1}(s-h)\vert+\vert a^{\delta}_{2}(s-h)\vert+\vert a^{\delta}_{3}(s-h)\vert) ds, \quad t\in [0,T].
\end{align*}
As a result, using the dominated convergence theorem, we get
\begin{align*}
	\lim_{\delta\to 0}	\vert Y^{\delta}(t)-Y(t) \vert=0, \quad t\in [0,T].
\end{align*}
With optimality of $(y^{*}(\cdot),u^{*}(\cdot))$, we have
\begin{align*}
	0\leq&\frac{J(u^{\delta}(\cdot)-J(u^{*}(\cdot)))}{\delta}=\int_{0}^{T}\big[ g^{\delta}_{y}(t)Y^{\delta}(t)+g^{\delta}_{y_{h}}(t)Y^{\delta}(t-h)+\frac{1}{\delta}1_{E_{\delta}}(t)\widehat{g}(t)\big]dt
\end{align*}
where $\widehat{g}(\cdot)$ is denoted in (3.11), and 
\begin{align*}
	g^{\delta}_{y}(t)=&\int_{0}^{1} g_{y}(t,y^{*}(t)+\tau(y^{\delta}(t)-y^{*}(t),y^{*}(t-h)+\tau(y^{\delta}(t-h)-y^{*}(t-h)),u^{\delta}(t))d\tau,\\
	g^{\delta}_{y_{h}}(t)=&\int_{0}^{1} g_{y_{h}}(t,y^{*}(t)+\tau(y^{\delta}(t)-y^{*}(t),y^{*}(t-h)+\tau(y^{\delta}(t-h)-y^{*}(t-h)),u^{\delta}(t))d\tau, \quad t\in [0,T].
\end{align*}
Therefore, by utilizing (3.12) and the convergence $y^{\delta}(t)\to y^{*}(t)$, \quad $t\in [0,T]$, it follows
\begin{align*}
	0\leq&\int_{0}^{T}\Big( g_{y}(t,y^{*}(t),y^{*}(t-h),u^{*}(t))Y(t)+g_{y_{h}}(t,y^{*}(t),y^{*}(t-h),u^{*}(t))Y(t-h)\\
	+&g(t,y^{*}(t),y^{*}(t-h),u(t))-g(t,y^{*}(t),y^{*}(t-h),u^{*}(t))\Big)dt.
\end{align*}

\textbf{step2.} \textbf{\textit{Duality.}}
Suppose $\psi(\cdot)$ be the solution of the adjoint equation (3.9). Than we get
\begin{align*}
		0\leq&\int_{0}^{T}\Big(g_{y}(s,y^{*}(s),y^{*}(s-h),u^{*}(s))Y(s)+g_{y_{h}}(s,y^{*}(s),y^{*}(s-h),u^{*}(s))Y(s-h)\\
		+&g(s,y^{*}(s),y^{*}(s-h),u(s))-g(s,y^{*}(s),y^{*}(s-h),u^{*}(s))\Big)ds\\
		=&\int_{0}^{T}\Big(g_{y}(s,y^{*}(s),y^{*}(s-h),u^{*}(s))+ \chi_{(0,T-h)}(s) g_{y_{h}}(s+h,y^{*}(s+h),y^{*}(s),u^{*}(s+h))\Big)Y(s)ds\\
		+&\int_{0}^{T}g(s,y^{*}(s),y^{*}(s-h),u(s))-g(s,y^{*}(s),y^{*}(s-h),u^{*}(s))ds\\
		=&\int_{0}^{T}\Big(-{^{C}}D^{\alpha}_{T}\psi(s)+f_{y}(s,y^{*}(s),y^{*}(s-h),u^{*}(s))\psi(s)\\
		+&\chi_{(0,T-h)}(s)f_{y_{h}}(s+h,y^{*}(s+h),y^{*}(s),u^{*}(s+h))\psi(s+h)\Big)Y(s)ds\\
		+&\int_{0}^{T}g(s,y^{*}(s),y^{*}(s-h),u(s))-g(s,y^{*}(s),y^{*}(s-h),u^{*}(s))ds\\
		=&\int_{0}^{T}\Big(-{^{C}}D^{\alpha}_{T}\psi(s)\Big)Y(s)ds+\int_{0}^{T}\Big(f_{y}(s,y^{*}(s),y^{*}(s-h),u^{*}(s))\psi(s)\\
		+&\chi_{(0,T-h)}(s)f_{y_{h}}(s+h,y^{*}(s+h),y^{*}(s),u^{*}(s+h))\psi(s+h)\Big)Y(s)ds\\
		+&\int_{0}^{T}g(s,y^{*}(s),y^{*}(s-h),u(s))-g(s,y^{*}(s),y^{*}(s-h),u^{*}(s))ds\\
		=&\int_{0}^{T}\psi(s)\Big(-{^{C}}D^{\alpha}_{0}Y(s)\Big)ds+\int_{0}^{T}f_{y}(s,y^{*}(s),y^{*}(s-h),u^{*}(s))\psi(s)Y(s)ds\\
		+&\int_{0}^{T}\chi_{(0,T-h)}(s)f_{y_{h}}(s+h,y^{*}(s+h),y^{*}(s),u^{*}(s+h))\psi(s+h)\Big)Y(s)ds\\
		+&\int_{0}^{T}g(s,y^{*}(s),y^{*}(s-h),u(s))-g(s,y^{*}(s),y^{*}(s-h),u^{*}(s))ds\\
		=&\int_{0}^{T}\psi(s)\bigg[ -{^{C}}D^{\alpha}_{0}Y(s)+f_{y}(s,y^{*}(s),y^{*}(s-h),u^{*}(s))Y(s)\bigg]ds\\
		+&\int_{h}^{T+h}\chi_{(h,T)}(s-h)f_{y_{h}}(s,y^{*}(s),y^{*}(s-h),u^{*}(s))\psi(s)Y(s-h) ds\\
		+&\int_{0}^{T}g(s,y^{*}(s),y^{*}(s-h),u(s))-g(s,y^{*}(s),y^{*}(s-h),u^{*}(s))ds\\
		=&\int_{0}^{T}\psi(s)\bigg[ -{^{C}}D^{\alpha}_{0}Y(s)+f_{y}(s,y^{*}(s),y^{*}(s-h),u^{*}(s))Y(s)\\
		+&\chi_{(h,T)}(s-h)f_{y_{h}}(s,y^{*}(s),y^{*}(s-h),u^{*}(s))Y(s-h) \bigg]ds\\
		+&\int_{0}^{T}g(s,y^{*}(s),y^{*}(s-h),u(s))-g(s,y^{*}(s),y^{*}(s-h),u^{*}(s))ds\\
		=&\int_{0}^{T}-\psi^{\top}(s)\widehat{f}(s)ds+\int_{0}^{T}\widehat{g}(s)ds=\int_{0}^{T}[\widehat{g}(s)-\psi^{\top}(s)\widehat{f}(s)]ds\equiv \int_{0}^{T}[\Psi(s,u^{*})-\Psi(s,u)]ds
\end{align*}
where
\begin{align*}
&\Psi (t,u)=\psi^{\top}(t)f(t,y^{*}(t),y^{*}(t-h),u(t))-g(t,y^{*}(t),y^{*}(t-h),u(t))
\end{align*}
and $Y(\cdot)$ is the solution of the following differential equation:
\begin{align*}
	\begin{cases}
		&{^{C}}D^{\alpha}_{0}Y(s)=f_{y}(t,y^{*}(t),y^{*}(t-h),u^{*}(t))Y(t)
		+\chi_{(h,T)}(t-h)f_{y_{h}}(t,y^{*}(t),y^{*}(t-h),u^{*}(t))Y(t-h)+\widehat{f}(t)\\
		&Y(t)=0, \quad -h\leq t\leq 0, \quad h>0.
	\end{cases}
\end{align*}
Let $u\in U$ be fixed and let $s_{0}\in(0,T)$ be a Lebesgue point of $\Psi(s,u^{*})-\Psi(s,u)$. Then for any $\delta>0$, let
\begin{align*}
	u(s)=
	\begin{cases}
		u^{*}(s),\quad s\in [0,T]\setminus (s_{0}-\delta, s_{0}+\delta),\\
	u,\quad t\in(s_{0}-\delta, s_{0}+\delta).
		\end{cases}
\end{align*}
Therefore, from the above, we get 
\begin{align*}
	0\leq \frac{1}{2\delta}=\int_{s_{0}-\delta}^{s_{0}+\delta}[\Psi(s,u^{*}(s))-\Psi(s,u)]ds\to \Psi(s_{0},u^{*}(s_{0}))-\Psi(s_{0},u),\quad \delta\to 0.
\end{align*}
Consequently, by the Lebesgue point theorem, we get the maximum condition (3.10).
\end{proof}
\bigskip
\subsection{\textbf{Maximum principle for the delayed Volterra equation with the weak singular kernel}}
\bigskip
Let us research the following issue
\begin{equation}
	\begin{cases}
	y(t)=\eta(t)+\int_{0}^{t}\frac{f(t,s,y(s),y(s-h),u(s))}{(t-s)^{1-\alpha}}ds , \quad a.e.\quad  t\in [0,T],\\
		y(t)=0,\quad -h\leq t\leq 0,\quad h>0.
	\end{cases}
\end{equation}
where $\eta(\cdot)\in L^{p}(0,T;R)$.
\begin{equation}
	J(u(\cdot))=\int_{0}^{T} g(t,y(t),y(t-h),u(t))dt.
\end{equation}
Now, we discuss the optimal control problem for (3.15) with payoff functional (3.16).  We introduce certain assumptions that are considered to be more than enough for our requirements.  Therefore, we will make use of these conditions without any loss of generality.

($H1^{\prime}$): Let $f: \Delta \times R^{n}\times R^{n}\times U\rightarrow R^{n}$  be  a transformation with $(t,s)\mapsto f(t,s,y,y_{h},u)$ being measurable, $(y,y_{h})\mapsto f(t,s,y,y_{h},u)$ being continuously differentiable,  $(y,y_{h},u)\mapsto f(t,s,y,y_{h},u)$ , $(y,y_{h},u)\mapsto f_{y}(t,s,y,y_{h},u)$ and  $(y,y_{h},u)\mapsto f_{y_{h}}(t,s,y,y_{h},u)$being continuous. Furthermore, there are non-negative functions involved in these conditions. $L_{0}(\cdot), L(\cdot)$ with 
\begin{align}
	&L_{0}(\cdot)\in L^{\frac{1}{\alpha}{+}}(0,T),\quad L(\cdot)\in L^{\frac{p}{p\alpha-1}{+}}(0,T)
\end{align}
for some $p>\frac{1}{\alpha}$ and $\alpha \in (0,1), u_{0}\in U$.
\begin{align}
	\vert f(t,s,0,0,u_{0})\vert \leq L_{0}(s), \quad (t,s)\in \Delta,
\end{align}
\begin{align}
	&\vert f(t,s,y,y_{h},u)-f(t,s,y^{\prime},y^{\prime}_{h},u^{\prime})\vert
	\leq L(s)[\vert y-y^{\prime}\vert+\vert y_{h}-y^{\prime}_{h}\vert+\rho(u,u^{\prime})],\nonumber\\
	 &(t,s)\in \Delta, \quad y,y^{\prime}, y_{h},y^{\prime}_{h}\in R^{n}, \quad u,u^{\prime}\in U.
\end{align}
We point out (3.18)-(3.19) declare
\begin{align}
	\vert  f(t,s,y,y_{h},u)\vert \leq L_{0}(s)+L(s)[\vert y\vert +\vert y_{h}\vert +\rho(u,u_{0})],\quad (t,s,y,y_{h},u)\in \Delta\times R^{n}\times R^{n}
	\times U.
\end{align}
\begin{align}
	\vert f(t,s,y,y_{h},u)-f(t^{\prime},y,y_{h},u)\vert\leq K\omega (\vert t-t^{\prime}\vert)(1+\vert y\vert +\vert y_{h}\vert), \quad (t,s),(t^{\prime},s)\in\Delta,\quad y,y_{h}\in R^{n}, \quad u\in U,
\end{align}
for some modulus of continuity $\omega(\cdot)$.
Moreover, it is evident that $L$ is included in a smaller space, compared to the space to which $L_{0}$ belongs.

($H2^{\prime}$): Let $g: [0,T]\times R^{n}\times R^{n}\times U\rightarrow R$ be a transformation with $t\mapsto g(t,y,y_{h},u)$ being measurable, $(y,y_{h})\mapsto g(t,y,y_{h},u)$ being continuously differentiable, and $(y,y_{h},u)\mapsto (g(t,y,y_{h},u),g_{y}(t,y,y_{h},u),g_{y_{h}}(t,y,y_{h},u))$ being continuous. There is a constant $L>0$ such that 
\begin{align*}
	\vert g(t,0,0,u)\vert +	\vert g_{y}(t,0,0,u)\vert+	\vert g_{y_{h}}(t,0,0,u)\vert\leq L, \quad (t,y,y_{h},u)\in [0,T]\times R^{n}\times R^{n}\times U.
\end{align*}
Given that conditions ($H1^{\prime}$) and ($H2^{\prime}$) hold, we may proceed to analyze the optimal control problem (OCP) defined by the payoff functional (3.16).

\textbf{Problem ($P^{\prime}$)} Find a $u^{*}(\cdot)\in U^{p}[0,T]$ such that 
\begin{align}
	J(u^{*}(\cdot))=\inf_{u(\cdot)\in U^{p}[0,T]}J(u(\cdot)).
\end{align}
An optimal control for Problem ($P^{\prime}$) is any arbitrary $u^{}(\cdot)$ that satisfies equation (3.22). The corresponding state $y^{}(\cdot)$ is known as an optimal state, and when combined with the optimal control $u^{}$, they form the optimal pair $(y^{},u^{*})$.

Now, Pontryagin's maximum principle can be applied to Problem ($P^{\prime}$) and can be restated as follows.
\begin{lemma}
 Let ($H1^{\prime}$) and ($H2^{\prime}$) satisfy. Then the following result holds:
	\begin{equation}
		\sup_{t\in[0,T]}\vert y^{\delta}(t)-y^{*}-\delta Y(t)\vert=o(\delta),
	\end{equation}
	where $Y$ is the solution to the first variational equation related to the optimal pair $(y^{*},u^{*}(\cdot))\in R^n\times U^p[0,T]$ given by
	\begin{align}
		Y(t)=&\int_{0}^{t}\bigg[ \frac{f_{y}(t,s,y^{*}(s),y^{*}(s-h),u^{*}(s))}{(t-s)^{1-\alpha}}Y(s)+\frac{f_{y_{h}}(t,s,y^{*}(s),y^{*}(s-h),u^{*}(s))}{(t-s)^{1-\alpha}}Y(s-h)+\frac{\widehat{f}(t,s)}{(t-s)^{1-\alpha}}\bigg]ds
	\end{align}
	with for any $u(\cdot)\in U^p[0,T],$
	\begin{align}
		\widehat{f}(t,s)=f(t,s,y^{*}(s),y^{*}(s-h),u(s))-f(t,s,y^{*}(s),y^{*}(s-h),u^{*}(s)), \quad (t,s) \in \Delta.
	\end{align}
\end{lemma}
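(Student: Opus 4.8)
The plan is to reduce (3.23) to a uniform difference-quotient estimate and then close it with the weakly singular delayed Gronwall inequality (Lemma 2.1). Writing $y^\delta$ for the state driven by the needle variation $u^\delta$ that agrees with $u^*$ off a set $E_\delta\in\mathscr{E}_\delta$ and equals $u$ on $E_\delta$ (so $|E_\delta|=\delta T$), exactly as in the proof of Theorem 3.1 but now with the two-variable increment $\widehat f(t,s)$ of (3.25), and setting $Y^\delta(t):=\delta^{-1}\big(y^\delta(t)-y^*(t)\big)$, the bound (3.23) is equivalent to $\sup_{t\in[0,T]}|Y^\delta(t)-Y(t)|\to 0$ as $\delta\to 0$, since $|y^\delta(t)-y^*(t)-\delta Y(t)|=\delta|Y^\delta(t)-Y(t)|$. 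The whole argument is therefore carried out on $Y^\delta-Y$.

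First I would derive the integral identity for the increment: subtracting the state equation (3.15) for $u^*$ from that for $u^\delta$ and inserting $\pm f(t,s,y^*(s),y^*(s-h),u^\delta(s))$ under the integral, the fundamental theorem of calculus along the segment joining $(y^*,y^*_h)$ to $(y^\delta,y^\delta_h)$ gives
\begin{align*}
y^\delta(t)-y^*(t)=\int_0^t\Big[\frac{f_y^\delta(t,s)}{(t-s)^{1-\alpha}}\big(y^\delta(s)-y^*(s)\big)+\frac{f_{y_h}^\delta(t,s)}{(t-s)^{1-\alpha}}\big(y^\delta(s-h)-y^*(s-h)\big)+1_{E_\delta}(s)\frac{\widehat f(t,s)}{(t-s)^{1-\alpha}}\Big]ds,
\end{align*}
where $f_y^\delta,f_{y_h}^\delta$ are the averaged Jacobians at control $u^\delta$. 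By $(H1')$ we have $|f_y^\delta(t,s)|,|f_{y_h}^\delta(t,s)|\le L(s)$ and $|\widehat f(t,s)|\le\varphi(s)$ for a suitable $\varphi\in L^q(0,T)$ with $q\in(1/\alpha,p)$; Lemma 2.1 together with Hölder's inequality then yields the a priori estimate $\sup_{t}|y^\delta(t)-y^*(t)|\le K'\delta^{\theta}\to 0$ for some $\theta>0$, and the same lemma provides a well-defined solution $Y$ of (3.24) with $\sup_t|Y(t)|\le K'\|\varphi\|_q$.

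Next I would select $E_\delta$ and pass to the equation for $Y^\delta-Y$. Since the growth bound (3.20) and the $t$-modulus estimate (3.21) place $\widehat f(\cdot,\cdot)$ within the scope of Lemma 2.2, for each $\delta$ there is $E_\delta\in\mathscr{E}_\delta$ with
\begin{align*}
a_3^\delta(t):=\int_0^t\Big(\frac{1}{\delta}1_{E_\delta}(s)-1\Big)\frac{\widehat f(t,s)}{(t-s)^{1-\alpha}}ds,\qquad \sup_{t\in[0,T]}|a_3^\delta(t)|\longrightarrow 0.
\end{align*}
Dividing the increment identity by $\delta$ and subtracting the variational equation (3.24) gives
\begin{align*}
Y^\delta(t)-Y(t)=\int_0^t\frac{f_y^\delta(t,s)}{(t-s)^{1-\alpha}}\big(Y^\delta(s)-Y(s)\big)ds+\int_0^t\frac{f_{y_h}^\delta(t,s)}{(t-s)^{1-\alpha}}\big(Y^\delta(s-h)-Y(s-h)\big)ds+a_1^\delta(t)+a_2^\delta(t)+a_3^\delta(t),
\end{align*}
where $a_1^\delta,a_2^\delta$ gather the Jacobian mismatches $\big(f_y^\delta-f_y(\cdot,u^*)\big)Y$ and $\big(f_{y_h}^\delta-f_{y_h}(\cdot,u^*)\big)Y(\cdot-h)$ against the singular kernel.

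Finally I would show $\sup_t(|a_1^\delta|+|a_2^\delta|+|a_3^\delta|)\to 0$ and close with Gronwall. Because $y^\delta\to y^*$ uniformly and $u^\delta=u^*$ off $E_\delta$ with $|E_\delta|\to 0$, continuity of $f_y,f_{y_h}$ in $(y,y_h,u)$ forces $f_y^\delta(t,s)\to f_y(t,s,y^*(s),y^*(s-h),u^*(s))$ and likewise for $f_{y_h}^\delta$ for a.e. $s$; the integrands being dominated by $2L(s)(t-s)^{\alpha-1}|Y(s)|$ (resp. $|Y(s-h)|$), the dominated convergence theorem gives $a_1^\delta(t),a_2^\delta(t)\to 0$, while $a_3^\delta$ was handled above. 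Applying the delayed weakly singular Gronwall inequality (Lemma 2.1) to
\begin{align*}
|Y^\delta(t)-Y(t)|\le\int_0^t\frac{L(s)}{(t-s)^{1-\alpha}}|Y^\delta(s)-Y(s)|ds+\int_0^t\frac{L(s)}{(t-s)^{1-\alpha}}|Y^\delta(s-h)-Y(s-h)|ds+\sum_{i=1}^3|a_i^\delta(t)|
\end{align*}
bounds $\sup_t|Y^\delta(t)-Y(t)|$ by a constant multiple of $\sup_t\sum_i|a_i^\delta(t)|\to 0$; multiplying by $\delta$ returns (3.23). The main obstacle is that $f_y^\delta(t,s)$ carries the free variable $t$ in its first slot, so promoting the a.e.-in-$s$ convergence of $a_1^\delta,a_2^\delta$ to convergence that is \emph{uniform in $t$}, despite the singular weight $(t-s)^{\alpha-1}$, requires a uniform-in-$t$ dominated-convergence (uniform integrability) argument exploiting that the coefficient gap is bounded by $2L(s)$ independently of $t$; this is exactly where the exponent bookkeeping of $(H1')$ ($L\in L^{p/(p\alpha-1)+}$, $q>1/\alpha$) and the stepwise delay-shifted kernel structure of Lemma 2.1 (the summation over the shifts $t,t-h,\dots,t-(n-1)h$) must be tracked through each remainder term.
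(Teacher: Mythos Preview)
Your proposal is correct and follows exactly the approach the paper intends: the paper's proof of this lemma is the single sentence ``It is obvious from step 1 in the proof of Theorem 3.1,'' and what you have written is precisely that Step~1 (spike variation, averaged Jacobians, Lemma~2.2 choice of $E_\delta$, splitting into $a_1^\delta,a_2^\delta,a_3^\delta$, dominated convergence, and closure by the delayed weakly-singular Gronwall Lemma~2.1) transcribed to the two-variable generator $f(t,s,\cdot)$. Your flagged ``main obstacle'' about the extra $t$-slot in $f_y^\delta(t,s)$ is a genuine detail the paper glosses over, but your remedy---uniform-in-$t$ dominated convergence using the $t$-independent majorant $2L(s)(t-s)^{\alpha-1}|Y(s)|$ and the exponent bookkeeping of $(H1')$---is the right way to handle it.
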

\begin{proof}
	It is obvious from step 1 in the proof of Theorem 3.1.
\end{proof}
\begin{theorem} Let ($H1^{\prime}$) and ($H2^{\prime}$) satisfy. Assume $(y^{*}(\cdot),u^{*}(\cdot))$  is an optimal pair of Problem($P^{\prime}$). Then there is a solution $\varphi\in L^{\frac{p}{p-1}}(0,T;R^{n})$ of the following adjoint equation
	\begin{align}
			&\psi(t)=-g_{y}(t,y^{*}(t),y^{*}(t-h),u^{*}(t))^{\top}-g_{y_{h}}(t,y^{*}(t),y^{*}(t-h),u^{*}(t))^{\top}\nonumber\\
			+&\int_{s}^{T}\frac{f_{y}(t,s,y^{*}(s),y^{*}(s-h),u^{*}(s))^{\top}}{(t-s)^{1-\alpha}}\psi(t)dt+\int_{s+h}^{T}\frac{f_{y_{h}}(t,s+h,y^{*}(s+h),y^{*}(s),u^{*}(s+h))^{\top}}{(t-h-s)^{1-\alpha}}\psi(t)dt \quad a.e.\quad t\in [0,T],\\
			&\psi(t)=0, \quad T\leq t \leq T+h, \quad h>0.\nonumber
	\end{align}
	such that the following maximum condition satisfies:
	\begin{align}
		&\int_{s}^{T}\psi(t)^{\top} \frac{f(t,s,y^{*}(s),y^{*}(s-h),u^{*}(s))}{(t-s)^{1-\alpha}}dt
		-g(s,y^{*}(s),y^{*}(s-h),u^{*}(s))\nonumber\\
		=&\max_{u\in U}\big[\int_{s}^{T}\psi(t)^{\top} \frac{f(t,s,y^{*}(s),y^{*}(s-h),u(s))}{(t-s)^{1-\alpha}}dt
		-g(s,y^{*}(s),y^{*}(s-h),u(s))\big], \quad \forall u\in U, \quad a.e. \quad t\in [0,T].
	\end{align}
\end{theorem}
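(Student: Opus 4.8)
The plan is to reproduce the two-step architecture of Theorem 3.1, now replacing the fractional integration-by-parts duality (Lemma 2.3) by a Fubini argument tailored to the weakly singular Volterra kernel. \textbf{Step 1 (variational inequality).} Lemma 3.1 already supplies the first-order expansion $\sup_{t\in[0,T]}\vert y^{\delta}(t)-y^{*}(t)-\delta Y(t)\vert=o(\delta)$, with $Y$ the solution of the variational equation (3.24). I would therefore fix an arbitrary $u(\cdot)\in U^{p}[0,T]$, construct the perturbed control $u^{\delta}$ through Lemma 2.2 exactly as in (3.13)--(3.14), and expand the cost. Using the continuous differentiability and the growth bounds on $g$ from ($H2'$) together with the uniform convergence $y^{\delta}\to y^{*}$, I divide $J(u^{\delta})-J(u^{*})\ge 0$ by $\delta$ and let $\delta\to 0$ to obtain
\[
0\le \int_{0}^{T}\big[g_{y}(t)Y(t)+g_{y_{h}}(t)Y(t-h)+\widehat{g}(t)\big]\,dt,
\]
where $g_{y}(t),g_{y_{h}}(t)$ are evaluated along $(y^{*},u^{*})$ and $\widehat{g}(t)=g(t,y^{*}(t),y^{*}(t-h),u(t))-g(t,y^{*}(t),y^{*}(t-h),u^{*}(t))$.

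\textbf{Step 2 (duality by Fubini).} First I collapse the two state terms into one: shifting $t\mapsto t+h$ in the delayed integral and using $Y\equiv 0$ on $[-h,0]$ gives $\int_{0}^{T}g_{y_{h}}(t)Y(t-h)\,dt=\int_{0}^{T-h}g_{y_{h}}(t+h,y^{*}(t+h),y^{*}(t),u^{*}(t+h))Y(t)\,dt$, so the bracket in Step 1 becomes $\int_{0}^{T}\widetilde{G}(s)Y(s)\,ds$ with $\widetilde{G}(s)=g_{y}(s)+\chi_{(0,T-h)}(s)g_{y_{h}}(s+h,y^{*}(s+h),y^{*}(s),u^{*}(s+h))$. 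Next I pair the variational equation (3.24), written schematically as $(I-\mathcal{K})Y=F$ with $F(t)=\int_{0}^{t}\frac{\widehat{f}(t,s)}{(t-s)^{1-\alpha}}\,ds$ and $\mathcal{K}$ the weakly singular Volterra operator carrying the kernels $f_{y},f_{y_{h}}$, against $\psi$. Swapping the order of integration shows that the formal transpose $\mathcal{K}^{*}$ produces exactly the two backward integrals of the adjoint equation (3.26): the $f_{y}$-term yields $\int_{s}^{T}\frac{f_{y}(t,s)^{\top}}{(t-s)^{1-\alpha}}\psi(t)\,dt$, and the $f_{y_{h}}$-term, after the same shift $t\mapsto t+h$, yields $\chi_{(0,T-h)}(s)\int_{s+h}^{T}\frac{f_{y_{h}}(t,s+h)^{\top}}{(t-h-s)^{1-\alpha}}\psi(t)\,dt$. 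Hence (3.26) is precisely the resolvent identity $(I-\mathcal{K}^{*})\psi=-\widetilde{G}$, and consequently
\[
\int_{0}^{T}\widetilde{G}(s)Y(s)\,ds=-\int_{0}^{T}\psi^{\top}(t)F(t)\,dt=-\int_{0}^{T}\int_{s}^{T}\psi^{\top}(t)\frac{\widehat{f}(t,s)}{(t-s)^{1-\alpha}}\,dt\,ds.
\]
Substituting into the Step 1 inequality and regrouping gives $0\le\int_{0}^{T}\big[\mathcal{H}(s,u^{*}(s))-\mathcal{H}(s,u(s))\big]\,ds$, where $\mathcal{H}(s,u)=\int_{s}^{T}\psi^{\top}(t)\frac{f(t,s,y^{*}(s),y^{*}(s-h),u)}{(t-s)^{1-\alpha}}\,dt-g(s,y^{*}(s),y^{*}(s-h),u)$.

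\textbf{Step 3 (localization).} As at the close of Theorem 3.1, I would fix $u\in U$, choose a Lebesgue point $s_{0}$ of $s\mapsto\mathcal{H}(s,u^{*}(s))-\mathcal{H}(s,u)$, spike the control to the constant $u$ on $(s_{0}-\delta,s_{0}+\delta)$, divide by the measure $2\delta$, and let $\delta\to 0$. The Lebesgue differentiation theorem then delivers $\mathcal{H}(s_{0},u^{*}(s_{0}))\ge\mathcal{H}(s_{0},u)$, which is exactly the maximum condition (3.27).

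The main obstacle is not the formal computation but verifying that the backward weakly singular equation (3.26) genuinely admits a solution $\psi\in L^{\frac{p}{p-1}}(0,T;R^{n})$ and that this integrability legitimizes every Fubini swap above. I would secure this by reversing time $t\mapsto T-t$, which converts (3.26) into a forward weakly singular Volterra equation, and then applying the Gronwall estimate of Lemma 2.1. The exponent budget furnished by ($H1'$), namely $p>\frac{1}{\alpha}$ and $L(\cdot)\in L^{\frac{p}{p\alpha-1}+}(0,T)$, is precisely what forces $\psi$ into $L^{\frac{p}{p-1}}$ and renders the iterated integrals absolutely convergent, so that Tonelli's theorem applies at each swap. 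Matching the delay-shifted singular exponents and the cutoffs $\chi_{(0,T-h)}$ to the exact form of (3.26) is the only delicate bookkeeping, and it is carried out just as in the $\widehat{f}$-estimate of Step 1 in Theorem 3.1.
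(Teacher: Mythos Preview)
Your proposal is correct and follows essentially the same architecture as the paper's proof: the variational inequality from Lemma~3.1, a Fubini-based duality pairing of the variational equation with the adjoint equation~(3.26), and the Lebesgue-point localization. Your operator-theoretic framing $(I-\mathcal{K})Y=F$, $(I-\mathcal{K}^{*})\psi=-\widetilde{G}$ is exactly the paper's computation expressed more compactly, and your additional care about the existence of $\psi\in L^{p/(p-1)}$ and the absolute convergence needed for Fubini in fact goes beyond what the paper's brief proof makes explicit.
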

\begin{proof}
Based on theorem 3.1, and lemma 4.1, we have 
\begin{align*}
	0\leq&\int_{0}^{T}\Big( g_{y}(t,y^{*}(t),y^{*}(t-h),u^{*}(t))Y(t)+g_{y_{h}}(t,y^{*}(t),y^{*}(t-h),u^{*}(t))Y(t-h)\\
	+&g(t,y^{*}(t),y^{*}(t-h),u(t))-g(t,y^{*}(t),y^{*}(t-h),u^{*}(t))\Big)dt.
\end{align*}
 Similarly, assume $\psi(\cdot)$ be the solution of the adjoint equation (3.26). Than we get
\begin{align*}
	0\leq&\int_{0}^{T}\Big( g_{y}(s,y^{*}(s),y^{*}(s-h),u^{*}(s))Y(s)+g_{y_{h}}(s,y^{*}(s),y^{*}(s-h),u^{*}(s))Y(s-h)\\
	+&g(s,y^{*}(s),y^{*}(s-h),u(s))-g(s,y^{*}(s),y^{*}(s-h),u^{*}(s))\Big)ds\\
	=&\int_{0}^{T}\big( -\psi(s)+\int_{s}^{T}\frac{f_{y}(t,s,y^{*}(s),y^{*}(s-h),u^{*}(s))}{(t-s)^{1-\alpha}}\psi(t)dt\\
    +&\int_{s+h}^{T}\frac{f_{y_{h}}(t,s+h,y^{*}(s+h),y^{*}(s),u^{*}(s+h))}{(t-h-s)^{1-\alpha}}\psi(t)dt\big)^{\top}Y(s)ds\\
	+&\int_{0}^{T}g(s,y^{*}(s),y^{*}(s-h),u(s))-g(s,y^{*}(s),y^{*}(s-h),u^{*}(s))\Big)ds\\
	=&\int_{0}^{T}\psi(t)^{\top}\big(-Y(t)+\int_{0}^{t}\frac{f_{y}(t,s,y^{*}(s),y^{*}(s-h),u^{*}(s))}{(t-s)^{1-\alpha}}Y(s)ds\\
	+&\int_{0}^{t}\frac{f_{y_{h}}(t,s,y^{*}(s),y^{*}(s-h),u^{*}(s))}{(t-s)^{1-\alpha}}Y(s-h)ds\big)dt\\
	+&\int_{0}^{T}g(s,y^{*}(s),y^{*}(s-h),u(s))-g(s,y^{*}(s),y^{*}(s-h),u^{*}(s))\Big)ds\\
	=&\int_{0}^{T}\bigg[-\psi(t)^{\top} \int_{0}^{t} \frac{f(t,s,y^{*}(s),y^{*}(s-h),u(s))-f(t,s,y^{*}(s),y^{*}(s-h),u^{*}(s))}{(t-s)^{1-\alpha}}ds\bigg]dt\\
	+&\int_{0}^{T}g(s,y^{*}(s),y^{*}(s-h),u(s))-g(s,y^{*}(s),y^{*}(s-h),u^{*}(s))\Big)ds\\
	=& \int_{0}^{T}\bigg[ \int_{s}^{T}-\psi(t)^{\top}\bigg( \frac{f(t,s,y^{*}(s),y^{*}(s-h),u(s))-f(t,s,y^{*}(s),y^{*}(s-h),u^{*}(s))}{(t-s)^{1-\alpha}}\bigg)dt\\
	+&g(s,y^{*}(s),y^{*}(s-h),u(s))-g(s,y^{*}(s),y^{*}(s-h),u^{*}(s))\bigg]ds\\
	\equiv& \int_{0}^{T}[\Psi(s,u^{*})-\Psi(s,u)]ds
\end{align*}
\begin{align*}
	\Psi(s,u)=&\int_{s}^{T}\psi(t)^{\top} \frac{f(t,s,y^{*}(s),y^{*}(s-h),u(s))}{(t-s)^{1-\alpha}}dt
	-g(s,y^{*}(s),y^{*}(s-h),u(s))
\end{align*}
Let $u\in U$ be fixed and let $s_{0}\in(0,T)$ be a Lebesgue point of $\Psi(s,u^{*})-\Psi(s,u)$. Then for any $\delta>0$, let
\begin{align*}
	u(s)=
	\begin{cases}
		u^{*}(s),\quad s\in [0,T]\setminus (s_{0}-\delta, s_{0}+\delta),\\
	u,\quad t\in(s_{0}-\delta, s_{0}+\delta).
		\end{cases}
\end{align*}
Therefore, from the above, we get 
\begin{align*}
	0\leq \frac{1}{2\delta}=\int_{s_{0}-\delta}^{s_{0}+\delta}[\Psi(s,u^{*}(s))-\Psi(s,u)]ds\to \Psi(s_{0},u^{*}(s_{0}))-\Psi(s_{0},u),\quad \delta\to 0.
\end{align*}
Consequently, by the Lebesgue point theorem, we get the maximum condition (3.27).
\end{proof}
\section{Example}
Let us consider the following fractional delay optimal control problem (FDOCP).
\begin{align}
	&(D^{\alpha}_{0+}y)(t)=Ay(t-h)+Bu(t),\quad t \in [0,2] \quad a.e.\\\nonumber
	\\
	&y(t)=0,\quad -h\leq t\leq0,\quad h>0, \\\nonumber
	\\
	&u(t)\in[0,1],\quad t\in [0,2],\\\nonumber
	\\
	&J(y,u)=\int_{0}^{2}(y_{1}(t-h)-y_{2}(t-h)+u(t))dt\to \min.
\end{align}

$where  \quad y\in R^{2},\quad A=\begin{bmatrix}
	0 & 1   \\[0.3em]
	0 & 0   \\[0.3em]
\end{bmatrix}, \quad
B=\begin{bmatrix}
	-1   \\[0.3em]
	-1   \\[0.3em]
\end{bmatrix}, \quad
 \alpha=\frac{1}{2},\quad h=\frac{1}{2}.$

In this case 
\begin{align*}
	g(t,y(t),y(t-h),u(t))=Ay(t-h)+Bu(t),\\
	g_{0}(t,y(t),y(t-h),u(t))=<(1,-1),y(t-h)>+u.
\end{align*}
Clearly,
\begin{align*}
	&A^{k}=(A^{T})^{k}=0,\quad k\geq 2,\\
	&g_{y_{h}}(t,y(t),y(t-h),u)=A,\quad  (g_{0})_{y_{h}}(t,y(t),y(t-h),u)=[1,-1].
\end{align*}
Consequently, if $(y^{*},u^{*})$ is a locally optimal solution to problem (4.1)-(4.4), hence there is $\lambda$ such that
\begin{align}
	&	\big( ^{C}D^{1/2}_{0+}\lambda \big)(t)=A^{T}\lambda(t-\frac{1}{2})+\begin{bmatrix}
		-1   \\[0.3em]
		-1   \\[0.3em]
	\end{bmatrix},\\
	&\lambda(t)=0,\quad t\in\big[-\frac{1}{2}, 0\big].
\end{align}
Moreover,
\begin{align}
	u^{*}(t)-\lambda(t)Bu^{*}(t)=\min_{u\in[0,1]}\lbrace u-\lambda(t) Bu\rbrace
\end{align}
for $t\in [0,2]$ a.e.
Now, we must prove the following theorem, which will be used later on.
\begin{theorem}(\cite{31})
	Let $\alpha\in(0,1)$. Assume
	\begin{align}
		\begin{cases}
			^{C}D^{\alpha}_{0+} x(t)=Ax(t)+B x(t-h)+Cu(t),\quad t\in [0,T],\\
			x(t)=\phi(t),\quad -h < t\leq0.
		\end{cases}
	\end{align}
	where $x\in R^{n},\quad u\in R^{m}$, $A$ and $B$ are $n\times n$ matrices, $C$ is an $n\times m$ matrix with $n >m$.  The solution of (4.8) is given as
	\begin{align*}
		x(t)=X_{\alpha}(t)\phi(0)+B\int_{-h}^{0}(t-s-h)^{\alpha-1}X_{\alpha,\alpha}(t-s-h)\phi(s)ds+\int_{0}^{t}(t-s)^{\alpha-1}X_{\alpha,\alpha}(t-s)Cu(s)ds
	\end{align*}
	where  
	\begin{align*}
		X_{\alpha}(t)=\mathscr{L}^{-1}\lbrace s^{\alpha-1}(s^{\alpha}I-A-Be^{-sh})^{-1}\rbrace(t),\quad and\quad X_{\alpha,\alpha}(t)=t^{1-\alpha}\int_{0}^{t}\frac{(t-s)^{\alpha-2}}{\Gamma(\alpha-1)}X_{\alpha}(s)ds .
	\end{align*}
\end{theorem}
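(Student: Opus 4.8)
The plan is to derive the representation by the Laplace transform method: apply the Laplace transform to the system (4.8), solve the resulting algebraic equation for $\hat{x}(s)=\mathscr{L}\{x\}(s)$, and then invert term by term using the very definitions of $X_{\alpha}$ and $X_{\alpha,\alpha}$ given in the statement. The two ingredients I would set up first are the Caputo transform rule $\mathscr{L}\{{^{C}}D^{\alpha}_{0+}x\}(s)=s^{\alpha}\hat{x}(s)-s^{\alpha-1}x(0)$, valid for $\alpha\in(0,1)$, and the transform of the delayed argument. For the latter, the substitution $\tau=t-h$ together with the prescribed history $x(t)=\phi(t)$ on $(-h,0]$ yields
\[
\mathscr{L}\{x(\cdot-h)\}(s)=e^{-sh}\Big(\hat{x}(s)+\int_{-h}^{0}e^{-s\tau}\phi(\tau)\,d\tau\Big),
\]
so the initial data enter solely through the boundary integral over $[-h,0]$.

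Substituting both formulas into (4.8) and collecting the terms containing $\hat{x}(s)$ gives the algebraic identity $\Delta(s)\hat{x}(s)=s^{\alpha-1}\phi(0)+Be^{-sh}\int_{-h}^{0}e^{-s\tau}\phi(\tau)\,d\tau+C\hat{u}(s)$, where $\Delta(s):=s^{\alpha}I-A-Be^{-sh}$. For $\mathrm{Re}(s)$ large the term $Be^{-sh}$ is dominated by $s^{\alpha}I$, so $\Delta(s)$ is invertible and
\[
\hat{x}(s)=s^{\alpha-1}\Delta(s)^{-1}\phi(0)+\Delta(s)^{-1}Be^{-sh}\int_{-h}^{0}e^{-s\tau}\phi(\tau)\,d\tau+\Delta(s)^{-1}C\hat{u}(s).
\]

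The three summands are inverted separately. By definition $X_{\alpha}(t)=\mathscr{L}^{-1}\{s^{\alpha-1}\Delta(s)^{-1}\}(t)$, so the first term returns $X_{\alpha}(t)\phi(0)$. The crux is to identify $\mathscr{L}^{-1}\{\Delta(s)^{-1}\}(t)=t^{\alpha-1}X_{\alpha,\alpha}(t)$. Using $\mathscr{L}\{t^{\beta-1}/\Gamma(\beta)\}=s^{-\beta}$ continued analytically to $\beta=\alpha-1<0$, i.e. $\mathscr{L}\{t^{\alpha-2}/\Gamma(\alpha-1)\}(s)=s^{1-\alpha}$, the convolution theorem applied to the defining relation $t^{\alpha-1}X_{\alpha,\alpha}(t)=\int_{0}^{t}\frac{(t-s)^{\alpha-2}}{\Gamma(\alpha-1)}X_{\alpha}(s)\,ds$ gives
\[
\mathscr{L}\{t^{\alpha-1}X_{\alpha,\alpha}(t)\}(s)=s^{1-\alpha}\mathscr{L}\{X_{\alpha}\}(s)=s^{1-\alpha}s^{\alpha-1}\Delta(s)^{-1}=\Delta(s)^{-1}.
\]
Then the convolution theorem turns the third summand into $\int_{0}^{t}(t-s)^{\alpha-1}X_{\alpha,\alpha}(t-s)Cu(s)\,ds$, and applying the shift theorem to $e^{-s(\tau+h)}$ inside the history integral converts the second summand into $B\int_{-h}^{0}(t-s-h)^{\alpha-1}X_{\alpha,\alpha}(t-s-h)\phi(s)\,ds$, which together give exactly the claimed representation.

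The main obstacle is the rigorous justification of the inversion: one must verify that $\Delta(s)^{-1}$ is analytic and decays suitably in a right half-plane so the Bromwich integral converges, and, more delicately, that the kernel $t^{\alpha-2}/\Gamma(\alpha-1)$ is handled correctly, since $\alpha-2<-1$ makes it non-integrable at the origin, so the convolution defining $X_{\alpha,\alpha}$ and the identity $\mathscr{L}\{t^{\alpha-2}/\Gamma(\alpha-1)\}=s^{1-\alpha}$ must be interpreted in the regularized (fractional-derivative) sense rather than as a classical absolutely convergent integral. Once this analytic bookkeeping is settled, the term-by-term inversion above is routine and yields the formula.
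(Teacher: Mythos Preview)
The paper does not give its own proof of this theorem: Theorem~4.1 is stated with a citation to \cite{31} and is followed immediately by Theorem~4.2, whose proof is the only argument supplied. So there is nothing in the paper to compare your proposal against line by line.

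That said, your Laplace-transform derivation is exactly the method the paper employs for the special case in Theorem~4.2 (where $A=0$, $\phi\equiv 0$, and $Cu(t)\equiv W$): transform the Caputo derivative, transform the delayed term via the substitution $\tau=t-h$, solve algebraically for $\hat{x}(s)$, and invert using a series or the definitions of the resolvent kernels. Your treatment is the natural generalization of that computation to nonzero $A$ and nontrivial history $\phi$, and the identification $\mathscr{L}\{t^{\alpha-1}X_{\alpha,\alpha}(t)\}(s)=\Delta(s)^{-1}$ via the convolution form of $X_{\alpha,\alpha}$ is the right way to handle the second and third terms. The caveats you raise about the regularized interpretation of $t^{\alpha-2}/\Gamma(\alpha-1)$ and about the decay of $\Delta(s)^{-1}$ are genuine technical points, but they are the standard ones for this type of argument and do not indicate a gap in strategy. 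One small point worth flagging: in the second (history) term your Laplace computation produces $\Delta(s)^{-1}B$, which inverts to $X_{\alpha,\alpha}(\cdot)B$ rather than $B\,X_{\alpha,\alpha}(\cdot)$; the order in the displayed formula of the statement puts $B$ on the left, which only agrees with your derivation when $A$ and $B$ commute (or as a typographical convention), so be alert to that when you write it up.
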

\begin{theorem}
	Let $\alpha\in(0,1)$. If $A \in R^{n\times n}$ and $W\in R^{n}$.
	\begin{align*}
		\begin{cases}
			^{C}D^{\alpha}_{0+} x(t)=A x(t-h)+W,\quad t\in [0,T],\\
			x(t)=0,\quad -h \leq t\leq0.
		\end{cases}
	\end{align*}
	has a unique solution $x$ given the by formula
	\begin{align*}
		x(t)=\sum_{k=0}^{\infty}\frac{A^{k}(t-kh)^{\alpha(k+1)}}{\Gamma(\alpha(k+1)+1)}W,\quad t\in [0,T].
	\end{align*}
\end{theorem}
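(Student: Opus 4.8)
The plan is to verify the closed-form series by reducing the Caputo initial-value problem to an equivalent weakly singular Volterra integral equation and then checking that the proposed $x(\cdot)$ solves it, after which uniqueness will follow by the method of steps. Since $x(t)=0$ on $[-h,0]$ forces $x(0)=0$, the standard relation $I^{\alpha}_{0+}({}^{C}D^{\alpha}_{0+}x)=x-x(0)$, which is consistent with the Caputo--Riemann--Liouville identity ${}^{C}D^{\alpha}_{0+}\phi={}^{RL}D^{\alpha}_{0+}(\phi-\phi(0))$ recorded in the Preliminaries, shows that the stated problem is equivalent to
\[
x(t)=\frac{1}{\Gamma(\alpha)}\int_{0}^{t}(t-s)^{\alpha-1}\big[A\,x(s-h)+W\big]\,ds,\qquad t\in[0,T].
\]
Throughout I adopt the convention $(t-kh)^{\alpha(k+1)}=0$ whenever $t\le kh$, so that for each fixed $t$ the displayed series is in fact a finite sum over $k\le t/h$. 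This makes every interchange of summation and integration below trivial, as it reduces to the integral of a finite sum, and it also shows that the regularity needed to make ${}^{C}D^{\alpha}_{0+}x$ meaningful is inherited from the power functions $t^{\alpha(k+1)}$, whose derivatives lie in $L^{1}$ near the singular endpoints.

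The computational heart of the argument is a shifted fractional integration-by-powers identity. Using the Beta integral $\int_{0}^{\tau}(\tau-\sigma)^{\alpha-1}\sigma^{\beta}\,d\sigma=\tau^{\alpha+\beta}\,\Gamma(\alpha)\Gamma(\beta+1)/\Gamma(\alpha+\beta+1)$ together with the substitution $\sigma=s-(k+1)h$, I would establish
\[
\frac{1}{\Gamma(\alpha)}\int_{0}^{t}(t-s)^{\alpha-1}\frac{(s-(k+1)h)^{\alpha(k+1)}}{\Gamma(\alpha(k+1)+1)}\,ds=\frac{(t-(k+1)h)^{\alpha(k+2)}}{\Gamma(\alpha(k+2)+1)}.
\]
Applying $I^{\alpha}_{0+}$ to the right-hand side of the integral equation then splits into the forcing term $I^{\alpha}_{0+}W=t^{\alpha}W/\Gamma(\alpha+1)$, which is exactly the $k=0$ term, and the delay term: writing $x(s-h)=\sum_{k\ge0}A^{k}(s-(k+1)h)^{\alpha(k+1)}W/\Gamma(\alpha(k+1)+1)$ and integrating term by term with the identity above produces $\sum_{k\ge0}A^{k+1}(t-(k+1)h)^{\alpha(k+2)}W/\Gamma(\alpha(k+2)+1)$. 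Re-indexing by $j=k+1$ and recombining with the $k=0$ term reproduces the series defining $x(t)$, so the proposed function solves the integral equation and hence the original problem.

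For uniqueness I would argue by the method of steps. On $[0,h]$ the history forces $x(s-h)=0$, so the equation reduces to ${}^{C}D^{\alpha}_{0+}x=W$ with $x(0)=0$, whose solution $t^{\alpha}W/\Gamma(\alpha+1)$ is unique; assuming $x$ is already determined on $[0,kh]$, on $[kh,(k+1)h]$ the map $s\mapsto A\,x(s-h)+W$ is a known $L^{1}$ function, so the equivalent integral equation pins $x$ down uniquely there, and induction closes the argument on $[0,T]$. Alternatively, the difference $z$ of two solutions satisfies $|z(t)|\le \tfrac{\|A\|}{\Gamma(\alpha)}\int_{0}^{t}(t-s)^{\alpha-1}|z(s-h)|\,ds$, which fits the hypotheses of the weakly singular Gronwall inequality of Lemma~2.1 with $a\equiv0$ (bounding the missing non-delay term by $0\le|z(s)|$ only enlarges the right side); the resulting estimate $y\equiv0$ gives $z\equiv0$.

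I expect the main obstacle to be purely bookkeeping: making the shifted Beta identity and the ensuing re-indexing $k\mapsto k+1$ line up the Gamma arguments $\alpha(k+1)+1\mapsto\alpha(k+2)+1$ correctly, and being careful that the positive-part convention keeps each summand supported on $\{t>kh\}$ so that the series genuinely collapses to a finite sum. No deep analytic difficulty arises, since convergence is automatic from finiteness and the equivalence with the integral equation is the standard Caputo inversion.
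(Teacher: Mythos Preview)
Your proof is correct but follows a genuinely different route from the paper. The paper proceeds via the Laplace transform: it transforms both sides, solves the resulting algebraic relation to obtain $\widehat{x}(s)=W/(s^{\alpha+1}-Ae^{-sh}s)$, expands this as a geometric series in $Ae^{-sh}/s^{\alpha}$, and inverts term by term using $\mathscr{L}^{-1}\{e^{-skh}/s^{\alpha(k+1)+1}\}(t)=(t-kh)^{\alpha(k+1)}/\Gamma(\alpha(k+1)+1)$. You instead convert the Caputo problem to the equivalent weakly singular Volterra equation and verify the series directly via the shifted Beta identity, then handle uniqueness separately by the method of steps (or the delayed Gronwall Lemma~2.1). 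Your approach is more elementary in that it avoids transform machinery and the attendant justifications (convergence of the geometric expansion for $\vert Ae^{-sh}/s^{\alpha}\vert<1$, term-by-term inversion), and it gives uniqueness explicitly, which the paper's proof leaves implicit in the injectivity of the Laplace transform. The trade-off is that the paper's method \emph{derives} the formula, whereas yours must be handed the candidate series in advance and then checks it; for a statement that already supplies the closed form, your verification-plus-steps argument is arguably the cleaner route.
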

\begin{proof}
	
	Using Laplace transform, we obtain 
	
	\begin{align*}
		\mathscr{L}\lbrace^{C}D^{\alpha}_{0+} x(t)\rbrace(s)=&\mathscr{L}\bigg\lbrace  {^{RL}}I^{1-\alpha}_{0+} \bigg(\frac{dx(t)}{dt}\bigg)\bigg\rbrace(s)=s^{\alpha-1}\mathscr{L}\bigg\lbrace  \frac{dx(t)}{dt}\bigg\rbrace(s)\\
		=&s^{\alpha-1}\bigg(s\widehat{x}(s)-x(0)\bigg)=s^{\alpha}\widehat{x}(s),
	\end{align*}
	where $\widehat{x}(s)=\mathscr{L}\lbrace x\rbrace(s)$.
	\begin{align*}
		&\mathscr{L}\lbrace x(t-h)\rbrace(s)=\int_{0}^{\infty}e^{-st}x(t-h)dt\\
		&(If \quad we\quad make\quad the\quad substitution\quad t-h=\theta)\\
		=&e^{-sh}\int_{-h}^{\infty}e^{-s\theta}x(\theta)d\theta=e^{-sh}\int_{0}^{\infty}e^{-s\theta}x(\theta)d\theta=e^{-sh}\widehat{x}(s).
	\end{align*}
	\begin{align*}
		&\mathscr{L}\lbrace W\rbrace(s)=\int_{0}^{\infty}e^{-st}Wdt=s^{-1}W
	\end{align*}
	\begin{align*}
		&s^{\alpha}\widehat{x}(s)=Ae^{-sh}\widehat{x}(s)+s^{-1}W\\
		&\widehat{x}(s)=\frac{W}{s^{\alpha+1}-Ae^{-sh}s}
	\end{align*}
	by using inverse Laplace transform
	\begin{align*}
		x(t)=\mathscr{L}^{-1} \bigg\lbrace \frac{W}{s^{\alpha+1}-Ae^{-sh}s}\bigg\rbrace(t)=W \mathscr{L}^{-1} \bigg\lbrace \frac{1}{s^{\alpha+1}-Ae^{-sh}s}\bigg\rbrace(t)
	\end{align*}
	\begin{align*}
		&\frac{1}{s^{\alpha+1}-Ae^{-sh}s}=\frac{1}{s^{\alpha+1}}\frac{1}{1-\frac{Ae^{-sh}}{s^{\alpha}}}=\frac{1}{s^{\alpha+1}}\sum_{k=0}^{\infty}\frac{A^{k}e^{-skh}}{s^{\alpha k}}
		=\sum_{k=0}^{\infty}\frac{A^{k}e^{-skh}}{s^{\alpha(k+1)+1}}
	\end{align*}
	\begin{align*}
		\mathscr{L}^{-1} \bigg\lbrace \frac{1}{s^{\alpha+1}-Ae^{-sh}s}\bigg\rbrace(t)=\sum_{k=0}^{\infty} A^{k}\frac{(t-kh)^{\alpha(k+1)}}{\Gamma(\alpha(k+1)+1)}
	\end{align*}
	then 
	\begin{align*}
		x(t)=\sum_{k=0}^{\infty} A^{k}\frac{(t-kh)^{\alpha(k+1)}}{\Gamma(\alpha(k+1)+1)}W.
	\end{align*}
\end{proof}
From theorem 4.2, it follows that a solution of problem (4.5)-(4.6) is given by
\begin{align*}
	\lambda(t)=&\begin{bmatrix}
		\lambda_{1}(t)   \\[0.3em]
		\lambda_{2}(t)   \\[0.3em]
	\end{bmatrix}=\frac{t^{1/2}}{\Gamma(3/2)} \begin{bmatrix}
		-1   \\[0.3em]
		1   \\[0.3em]
	\end{bmatrix}+\begin{bmatrix}
		0 & 1   \\[0.3em]
		0 & 0   \\[0.3em]
	\end{bmatrix}\frac{(t-1/2)}{\Gamma(2)}\begin{bmatrix}
		-1   \\[0.3em]
		1   \\[0.3em]
	\end{bmatrix}\\
	=&\begin{bmatrix}
		\lambda_{1}(t)   \\[0.3em]
		\lambda_{2}(t)   \\[0.3em]
	\end{bmatrix}=\frac{t^{1/2}}{\Gamma(3/2)} \begin{bmatrix}
		-1   \\[0.3em]
		1   \\[0.3em]
	\end{bmatrix}+\frac{(t-1/2)}{\Gamma(2)}\begin{bmatrix}
		1  \\[0.3em]
		0   \\[0.3em]
	\end{bmatrix}\\
	=&\begin{bmatrix}
		\frac{(t-1/2)}{\Gamma(2)}-\frac{t^{1/2}}{\Gamma(3/2)}  \\[0.3em]
		\frac{t^{1/2}}{\Gamma(3/2)}   \\[0.3em]
	\end{bmatrix},\quad t\in [0,2].
\end{align*}
thus, 
\begin{align*}
	\begin{bmatrix}
		\lambda_{1}(t)   \\[0.3em]
		\lambda_{2}(t)   \\[0.3em]
	\end{bmatrix}=&\begin{bmatrix}
		\frac{(t-1/2)}{\Gamma(2)}-\frac{t^{1/2}}{\Gamma(3/2)}  \\[0.3em]
		\frac{t^{1/2}}{\Gamma(3/2)}   \\[0.3em]
	\end{bmatrix},\quad t\in [0,2].
\end{align*}
Consequently, condition (4.7) is equivalent to the following one
\begin{align*}
	u^{*}(t)-(t-1/2)u^{*}(t)=\min_{u\in[0,1]}\lbrace u-(t-1/2)u\rbrace=\begin{cases}
		\frac{3}{2}-t, \quad t\in[0,1],\\
		0,\quad\quad t\in[1,2].
	\end{cases}
\end{align*}
for $t\in [0,2]$ a.e.
Thus 
\begin{align*}
	u^{*}(t)=\begin{cases}
		1, \quad t\in [0,1] \quad a.e.\\
		0, \quad t\in [1,2] \quad a.e.
	\end{cases}
\end{align*}
From theorem 4.1, it follows that a solution of (4.1)-(4.2), corresponding to $u^{*}(\cdot)$ is given by
\begin{align*}
	y^{*}(t)=&X_{\alpha}(t)y(0)+A\int_{-h}^{0}(t-s)^{\alpha-1}X_{\alpha,\alpha}(t-s)y(s)ds+\int_{0}^{t}(t-s)^{\alpha-1}X_{\alpha,\alpha}(t-s)Bu^{*}(s)ds\\
	=&\begin{cases}
		\int_{0}^{t}(t-s)^{\alpha-1}X_{\alpha,\alpha}(t-s)Bds,\quad t\in [0,1]\quad a.e.\\
		\int_{0}^{1}(t-s)^{\alpha-1}X_{\alpha,\alpha}(t-s)Bds,\quad t\in [1,2]\quad a.e.
	\end{cases}
\end{align*}
For our problem, it is easy to check that, $X_{\alpha}(t)$ and $X_{\alpha,\alpha}(t-s)$ are given the following form
\begin{align*}
	X_{\alpha}(t)=\sum_{k=0}^{\infty}A^{k}\frac{(t-kh)^{\alpha k}}{\Gamma(\alpha k+1)},\quad and\quad
	X_{\alpha,\alpha}(t-s)=\sum_{k=0}^{\infty}A^{k}\frac{(t-s-kh)^{\alpha( k+1)}}{\Gamma(\alpha (k+1)+1)}
\end{align*}
then, we get
\begin{align*}
	y^{*}(t)=&\begin{cases}\begin{bmatrix}
			\frac{t^{1/2}}{\Gamma(2)}+\frac{t}{\Gamma(3/2)}-\frac{2t^{3/2}}{3} \\[0.3em]
			\frac{t^{1/2}}{\Gamma(2)}-\frac{t}{\Gamma(3/2)}-\frac{2t^{3/2}}{3}  \\[0.3em]
		\end{bmatrix}
		,\quad\quad\quad\quad\quad\quad\quad\quad t\in [0,1]\quad a.e.\\
		\begin{bmatrix}
			\frac{(t-1)^{1/2}-t^{1/2}}{\Gamma(2)}-\frac{1}{\Gamma(3/2)}+\frac{2((t-1)^{3/2}-t^{3/2})}{3} \\[0.3em]
			\frac{2((t-1)^{3/2}-t^{3/2})}{3}-\frac{(t-1)^{1/2}-t^{1/2}}{\Gamma(2)}-\frac{1}{\Gamma(3/2)}  \\[0.3em]
		\end{bmatrix},\quad t\in [1,2]\quad a.e.
	\end{cases}
\end{align*}
It means that the pair $(y^{*}(t),u^{*}(t))$ is the only pair, which can be a locally optimal solution of problem (4.1)-(4.4).
		\\

\end{document}